\author{}
\DeclareSymbolFont{largesymbol}{OMX}{yhex}{m}{n}
\newtheorem{theo}{Theorem}[section]
\newtheorem{lemm}[theo]{Lemma}
\newtheorem{defi}[theo]{Definition}
\newtheorem{rema}{Remark}[section]
\numberwithin{equation}{section}
\begin{document}
	\title{Ill-posedness of the Kelvin-Helmholtz problem for compressible Euler fluids}
	
	\author{ Binqiang Xie $^{\dag}$, Bin Zhao $^{\ddag,*}$ \\[10pt]
		\small {$^\dag$ School of Mathematics and Statistics,}\\
		\small { Guangdong University of Technology, Guangzhou,   510006, China}
		\\
		\small {$^\ddag$  Institute of Applied Physics and Computational Mathematics, Beijing, 100088,  China}
	}
	
	\footnotetext{*Corresponding author.\\ E-mail addresses: \it xbqmath@gdut.edu.cn(B.Q. Xie),  \it zhaobin2017math@163.com(B. Zhao).}

	\date{}
	\maketitle
	\begin{abstract}
		In this paper, when the magnitude of the  Mach number is strictly between some fixed small enough constant and  $\sqrt{2}$, we can prove the linear and  nonlinear  ill-posedness of the Kelvin-Helmholtz problem for  compressible ideal  fluids. To our best knowledge, this is the first reslult that proves the nonlinear ill-posedness to the Kelvin-Helmholtz problem for the compressible Euler fluids. 
	\end{abstract}
	\section{Introduction}
	\subsection{Eulerian formulation}
	\quad \quad This paper concerns the Kelvin-Helmholtz problem for  compressible Euler fluids in the whole plane $\mathbb{R}^{2}$. More precisely, we consider two distinct invicid compressible, immiscible fluids evolving in the domain $\mathbb{R}^{2}$ for time $t\geq 0$. The fluids are separated from each other by a moving free surface $\Gamma(t)$, this surface divides $\mathbb{R}^{2}$ into two time-dependent, disjoint, open subsets $\Omega^{\pm}(t)$ such that $\Omega= \Omega^{+}(t) \cup \Omega^{-}(t) \cup \Gamma(t) $ and $\Gamma(t)=\bar{\Omega}^{+}(t) \cap \bar{\Omega}^{-}(t)$. The fluid occupying  $\Omega^{+}(t)$ is called the upper fluid and the second fluid, which occupies  $\Omega^{-}(t)$ is called the lower fluid. The two fluids  are sufficient  smooth to satisfy the pair of  compressible Euler equations:
	\begin{equation}
		\begin{cases}\label{1.1}
			\partial_t \rho^{\pm}+  \mathrm{div} (\rho^{\pm} u^{\pm}) =0,   \\
			\partial_t (\rho^{\pm} u^{\pm})+  \mathrm{div} (\rho^{\pm} u^{\pm}\otimes  u^{\pm}) +\nabla p^{\pm}=0,\\
		\end{cases}
	\end{equation}
	where  $u^{\pm}=(u_{1}^{\pm},u_{2}^{\pm})$ is the velocity field of the two fluids, $\rho ^{\pm}$ is the density of the two fluids,  $p^{\pm}$ denotes the  pressure of the two fluids in $\Omega^{\pm}$ respectively. We assume that $p$ is a $C^{\infty}$ function of $\rho$, defined on $(0,\infty)$ and such that $p^{\prime}(\rho)>0$ for all $\rho$. The speed of sound $c(\rho)$ in the fluid is defined by the relation:
	\begin{equation}\label{1.2}
		\forall \rho>0, ~c(\rho): = \sqrt{p^{\prime}(\rho)}.
	\end{equation}

	For the existence of weak solutions of \eqref{1.1} by the Rankine-Hugoniot jump relations of the hyperbolic system of equations, a standard assumption is that the pressure and the normal component of the  velocity must be continuous across the free boundary $\Gamma(t)=\{x_{2}= f(t,x_{1})\}$. Here the function $f$ describing the discontinuity front is part of the unknown of the problem, i.e. this is a free boundary problem.  Therefore, such piecewise smooth solution should satisfy the following
	boundary conditions on $\Gamma(t)$:
	\begin{equation} \label{1.5}
		\partial_{t}f= u^{+}\cdot n= u^{-}\cdot n, ~~p^{+}= p^{-},~~\mathrm{on}~~\Gamma(t),
	\end{equation}
	where $n=(-\partial_{x_{1}}f, 1)$ is the  normal vector to $\Gamma(t)$.

	To complete the statement of the problem, we must specify initial conditions. We give the initial interface $\Gamma_{0}$, which yields the open sets $\Omega^{\pm}_{0}$ on which we specify the initial density and velocity  field, $\rho^{\pm}(0,x)=\rho^{\pm}_{0}(x): \Omega_{0}^{\pm} \rightarrow \mathbb{R}^{+}$ and  $u^{\pm}(0,x)=u^{\pm}_{0}(x): \Omega_{0}^{\pm} \rightarrow \mathbb{R}^{2}$, respectively.

	Because $p^{\prime}(\rho)>0$, the function $p=p(\rho)$ can be inverted, allowing us to write $\rho=\rho(p)$. For  convenience in our subsequent analysis, given a  positive constant $\bar{\rho}$ defined in \eqref{1.888}, we introduce the quantity $E(p)=\log(\rho(p)/\bar{\rho})$ and consider $E$ as a new unknown quantity.  In terms of $(E,u)$, the system \eqref{1.1} is equivalent to the following equations:
	\begin{equation}\label{1.4}
		\begin{cases}
			\partial_{t} E+\left(u \cdot \nabla\right) E+  \nabla\cdot u=0, \\
			\partial_{t} u+\left(u \cdot \nabla\right) u+ c^{2} \nabla E=0.
		\end{cases}
	\end{equation}
	where the speed of sound is considered as a function of $E$, i.e., $c=c(E)$.
	
	The jump conditions \eqref{1.5} may be rewritten as 
	\begin{equation} \label{1.555}
		u^{+}\cdot n= u^{-}\cdot n, ~~E^{+}= E^{-},~~\mathrm{on}~~\Gamma(t).
	\end{equation}

		\subsection{Rectilinear solution}
\quad  It is easy to see that the system \eqref{1.1}-\eqref{1.5} admits a rectilinear solution $U=(\bar{f}, \rho^{\pm},\bar{u}^{\pm},)$ defined as following with the interface given by $\{x_{2}=0\}$ for all $t\geq0$. Then  $\Omega^{+}=\Omega^{+}(t)=\mathbb{R}\times (0,\infty)$ and  $\Omega^{-}=\Omega^{-}(t)=\mathbb{R}\times (-\infty,0)$ for all $t\geq0$.  More precisely, the front is flat, i.e., $\bar{f}=0$. To make sure the constant   density $\bar{\rho}^{\pm}$ satisfy the jump condition \eqref{1.5}, we must impose that 
\begin{equation} \label{1.888}
	\bar{\rho}^{+}=	\bar{\rho}^{-}:=\bar{\rho},
\end{equation}
where $\bar{\rho}$ is a positive constant. We also see that the upper fluid moves in the horizontal direction with some constant velocity and the lower fluid moves by the same constant velocity in the opposite direction, i.e, the  steady-state  constant velocity field $\bar{u}^{\pm}$  is the following form:
	\begin{equation}\label{1.3}
	\bar{u}=\left\{
		\begin{aligned}
			&(\bar{u}^{+}_{1},0)&x_2\geq0,\\
			&(\bar{u}^{-}_{1},0)&x_2<0,
		\end{aligned}
		\right.
	\end{equation}\label{1.777}
	where the constants $\bar{u}^{+}_{1},\bar{u}^{-}_{1}$ satisfy
	\begin{equation}
		\bar{u}^{+}_{1}=-\bar{u}^{-}_{1}.
	\end{equation}

	\subsection{New reformulation}
	\quad   Our analysis in this paper relies on the reformulation of the problem \eqref{1.4}-\eqref{1.555} under consideration in new coordinates.  To begin with, we define the fixed domains $\Omega^{\pm}$ as
	\begin{equation}\label{1.6}
		\begin{aligned}
			&\Omega^{+}:=\left\{x\in \mathbb{R}^{2}: x_{2}> 0\right\}, \\
			&\Omega^{-}:=\left\{x\in \mathbb{R}^{2}: x_{2}< 0\right\}.
		\end{aligned}
	\end{equation}
	Define the fixed  boundary $\Gamma$ as
	$$
	\Gamma:=\left\{x\in \mathbb{R}^{2}: x_{2}= 0\right\}.
	$$
	To reduce our free boundary problem to the fixed domain $\Omega^{\pm}$, we consider a change of variables on the whole space which maps $\Omega^{\pm}$ back to the origin domains $\Omega^{\pm}(t)$ by $(t,x_{1}, x_{2})\mapsto (t,x_{1}, x_{2}+ \psi(t,x))$.   We   construct such  $\psi$ by multiplying  the front $f$ by a smooth cut-off function depending on $x_{2}$:
	\begin{equation}\label{1.7}
		\psi(t,x_{1}, x_{2})= \theta(\frac{x_{2}}{3(1+a)})f(t,x_{1}),~a=\|f_{0}\|_{L^{\infty}(\mathbb{R})},
	\end{equation}
	where $\theta\in C^{\infty}_{c}(\mathbb{R})$ is a smooth cut-off function with $0\leq \theta\leq 1$, $\theta(x_{2})=1$, for $|x_{2}|\leq 1$, $\theta(x_{2})=0$ for $|x_{2}|\geq 3$, and $ |\partial_{2} \theta(x_{2})|\leq 1$ for all $x_{2}\in \mathbb{R}$,  writing $\partial_{j} = \partial/\partial x_{j}$.  We also assume 
	\begin{equation}\label{1.777}
 \|f_{0}\|_{L^{\infty}(\mathbb{R})}\leq 1.
		\end{equation}
Moreover, we have
	\begin{equation}\label{1.8}
		\begin{aligned}
			&\psi(t,x_{1}, 0,t)=f(t,x_{1}), \\
			&\partial_{2} \psi(t,x_{1}, 0)=0,\\
			&|\partial_{2} \psi | \leq \frac{1}{3(1+a)}|f|.
		\end{aligned}
	\end{equation}

	The change of variables that reduces the free boundary problem \eqref{1.1} to the fixed domain $\Omega^{\pm}$ is given in the following lemma.
	\begin{lemm}
		Define the function $\Psi$ by
		\begin{equation}\label{1.9}
			\Psi(t, x):=\left(x_{1}, x_{2}+\psi(t, x)\right), \quad(t, x) \in[0, T] \times \Omega.
		\end{equation}
		Then  $\Psi: (t,x)\mapsto (t,x_{1}, x_{2}+ \psi(t,x)) $ are diffeomorphism of $\Omega^{\pm}$ for all $t \in[0, T]$.
	\end{lemm}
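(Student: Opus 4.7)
The plan is to verify, at each fixed $t\in[0,T]$, that the map $\Psi(t,\cdot)$ is smooth, has strictly positive Jacobian determinant, is globally injective, and maps $\Omega^{\pm}$ surjectively onto $\Omega^{\pm}(t)$. Smoothness is immediate from the smoothness of $\theta$ and (the assumed regularity of) $f$. The Jacobian matrix of $\Psi(t,\cdot)$ reads
\begin{equation*}
D\Psi(t,x)=\begin{pmatrix} 1 & 0 \\ \partial_{1}\psi(t,x) & 1+\partial_{2}\psi(t,x)\end{pmatrix},
\end{equation*}
so $\det D\Psi=1+\partial_{2}\psi$. Combining the pointwise bound \eqref{1.8}, namely $|\partial_{2}\psi|\le \tfrac{1}{3(1+a)}|f|$, with the smallness hypothesis \eqref{1.777} and short-time control of $\|f(t,\cdot)\|_{L^{\infty}}$ (so that $\|f(t,\cdot)\|_{L^{\infty}}\le 1+a$), one obtains $|\partial_{2}\psi|\le \tfrac{1}{3}$, hence $\det D\Psi\ge \tfrac{2}{3}$ uniformly. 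This makes $\Psi(t,\cdot)$ a local $C^{\infty}$ diffeomorphism by the inverse function theorem.

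Next I would establish global injectivity. If $\Psi(t,x)=\Psi(t,y)$, the first coordinate equation forces $x_{1}=y_{1}$, and then
\begin{equation*}
x_{2}+\psi(t,x_{1},x_{2})=y_{2}+\psi(t,x_{1},y_{2}).
\end{equation*}
Since the one-variable function $s\mapsto s+\psi(t,x_{1},s)$ has derivative $1+\partial_{2}\psi\ge \tfrac{2}{3}>0$, it is strictly increasing, so $x_{2}=y_{2}$.

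For surjectivity, fix $x_{1}\in\mathbb{R}$ and consider again $s\mapsto s+\psi(t,x_{1},s)$. It is continuous, strictly increasing, and because $\psi$ is bounded on $\mathbb{R}$ in the $s$-variable (indeed $\theta$ has compact support), it tends to $\pm\infty$ as $s\to\pm\infty$; hence it is a bijection of $\mathbb{R}$ onto $\mathbb{R}$. Moreover, \eqref{1.8} yields $\psi(t,x_{1},0)=f(t,x_{1})$, so $s=0$ is sent to the front value $f(t,x_{1})$. Because the map is strictly increasing, the half-lines $\{s>0\}$ and $\{s<0\}$ are sent respectively onto $\{x_{2}>f(t,x_{1})\}$ and $\{x_{2}<f(t,x_{1})\}$. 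Varying $x_{1}$, this shows $\Psi(t,\cdot)(\Omega^{+})=\Omega^{+}(t)$ and $\Psi(t,\cdot)(\Omega^{-})=\Omega^{-}(t)$. Combining bijectivity with the uniform nonvanishing of $\det D\Psi$, the inverse function theorem provides a smooth inverse on each side, yielding the desired diffeomorphism.

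The main obstacle — indeed the sole nontrivial issue — is securing the uniform lower bound on $\det D\Psi$, i.e.\ the inequality $|\partial_{2}\psi|<1$. This is precisely why the cutoff was rescaled by the factor $3(1+a)$ with $a=\|f_{0}\|_{L^{\infty}}$ and why \eqref{1.777} is imposed; once that bound is in hand, injectivity, surjectivity, and smoothness of the inverse follow in a completely mechanical way from the one-dimensional monotonicity argument and the inverse function theorem.
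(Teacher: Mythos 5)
Your proposal is correct and follows essentially the same route as the paper: the key point in both is the lower bound $1+\partial_{2}\psi\geq$ a positive constant, obtained from the bound $|\partial_{2}\psi|\leq\frac{1}{3(1+a)}|f|$ in \eqref{1.8} together with \eqref{1.777} and short-time $L^{\infty}$ control of $f$. You additionally spell out the global injectivity and surjectivity via monotonicity of $s\mapsto s+\psi(t,x_{1},s)$, which the paper's one-line proof leaves implicit; this is a harmless (and welcome) elaboration rather than a different argument.
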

	\begin{proof}
		Since $\|f_{0}\|_{L^{\infty}(\mathbb{R})}\leq 1$, one can prove that there exists some $T>0$ such that $ \sup_{[0,T]} \|f\|_{L^{\infty}}<2$, the free interface is still a graph within the time interval $[0,T]$	and
		$$
		\begin{aligned}
			\partial_{2} \Psi_{2}(t, x) & =1+\partial_{2} \psi(t, x) \geq 1-\frac{1}{3}\times  2 = \frac{1}{3},
		\end{aligned}
		$$
		which ensure that $\Psi: (t,x_{1}, x_{2})\mapsto (t,x_{1}, x_{2}+ \psi(t,x)) $ are diffeomorphism of $\Omega$ for all $t \in[0, T]$.
	\end{proof}

	We introduce the following operator notation
	$$
	\begin{array}{ll}
		A=[D \Psi]^{-1}=\left(\begin{array}{ccc}
			1 & 0  \\
			-\partial_{1} \psi / J  & 1 / J
		\end{array}\right),
	\end{array}
	$$
	$$
	a=J A =\left(\begin{array}{ccc}
		J & 0  \\
		-\partial_{1} \psi &  1,
	\end{array}\right)
	$$ and $ J=\operatorname{det}[D \Psi]=1+\partial_{2} \psi$.
	Now we may reduce the free boundary problem \eqref{1.4}-\eqref{1.555} to a problem in the fixed domain $\Omega^{\pm}$ by the map $\Psi$ defined in Lemma 1.1. Let us set 
		\begin{equation}\label{1.8888}
		\begin{aligned}
	&v^{ \pm}(t, x):=u^{ \pm}(t, \Psi(t, x)), ~\varrho^{ \pm}(t, x):=\rho^{\pm}(t, \Psi(t, x)), \\
	&q^{ \pm}(t, x):=p^{ \pm}(t, \Psi(t, x)),~h^{ \pm}(t, x):=E^{\pm}(t, \Psi(t, x)).
		\end{aligned}
\end{equation}
	Throughout the rest paper, an equation on $\Omega$ means that the equations holds in both $\Omega^{+}$ and $\Omega^{-}$.  For convenience, we consolidate notation by writing $\varrho$, $v$, $q$, $h$ to refer to $\varrho^{\pm},v^{\pm}, q^{\pm},h^{\pm}$ except when necessary to distinguish the two. Then system \eqref{1.4} and boundary conditions \eqref{1.555} can be reformulated on the fixed reference domain $\Omega^{\pm}$ as
	\begin{equation}\label{1.10}
		\begin{cases}
			\partial_{t} h+\left(\breve{v} \cdot \nabla\right) h+ A^{T} \nabla \cdot v=0,&~~in~\Omega, \\
			\partial_{t} v+\left(\breve{v} \cdot \nabla\right) v+ c^{2} A^{T} \nabla h=0,&~~in~\Omega, \\
			\partial_{t} f=v \cdot n,  &~~on~\Gamma, \\
			[v\cdot n]=0,\quad[h]=0, &~~on~\Gamma,\\
			v_{\mid t=0}=v_{0}, \quad h_{\mid t=0}=h_{0},&~~in~\Omega,\\
			f_{\mid t=0}=f_{0},&~~on~\Gamma,
		\end{cases}
	\end{equation}
	where  we set
	\begin{equation*}
		\begin{aligned}
			\breve{v} :=A v-\left(0, \partial_{t} \psi / J\right)=\left(v_{1}, \left(v \cdot n-\partial_{t} \psi\right) / J\right),
		\end{aligned}
	\end{equation*}
	and the notation $[h]= h^{+}|_{\Gamma}-h^{-}|_{\Gamma}$ denotes the jump of a quantity $h$ across $\Gamma$.
	
	The initial data are required to satisfy
	\begin{equation}\label{1.1111}
			\begin{aligned}
	&h^{+}_{0}=	h^{-}_{0}, &\text{in} ~\Omega,\\
	&v^{+}_{0}\cdot n_{0}=	v^{-}_{0}\cdot n_{0}, &\text{on}~ \Gamma.
			\end{aligned}
	\end{equation}

	Notice that
	\begin{equation}\label{1.11}
		J=1, \quad \breve{v}_{2}=0 \quad \text { on } \Gamma.
	\end{equation}

	Since we are interested in Kelvin-Helmholtz instability, the instability behavior firstly happens on the boundary.  To see this,  we are going to derive an second order evolution equation for the front $f$ on the fixed boundary $\Gamma$. By using the  momentum equation of  \eqref{1.10}, we deduce that
	\begin{equation}\label{1.12}
		\begin{aligned}
			\partial^{2}_{t} f =&\partial_{t} v^{+}\cdot n + v^{+} \cdot \partial_{t} n|_{\Gamma} \\
			=& -(\left(\breve{v}^{+} \cdot \nabla\right) v^{+}+ c^{2} A^{T} \nabla h^{ +})\cdot n- v^{+} \cdot (\partial_{1}\partial_{t} f, 0)|_{\Gamma}\\
			=& -v^{+}_{1}\partial_{1} v^{+}\cdot n-c^{2} A^{T} \nabla h^{ +}\cdot n
			-v^{+}_{1} \partial_{1}\partial_{t} f  |_{\Gamma}\\
			=& v^{+}_{1}\partial_{1}  n \cdot v^{+}- v^{+}_{1}\partial_{1} \partial_{t} f +  c^{2} A^{T} \nabla h^{+}\cdot n
			-v^{+}_{1} \partial_{1}\partial_{t} f  |_{\Gamma}\\
			=& -2v^{+}_{1}\partial_{1} \partial_{t} f  -  c^{2} A^{T} \nabla h^{+}\cdot n - (v_{1}^{+})^{2} \partial^{2}_{11} f|_{\Gamma}.
		\end{aligned}
	\end{equation}
	Similarly, we derive an  evolution equation for the front $f$ from the negative part:
	\begin{equation}\label{1.13}
		\begin{aligned}
			\partial^{2}_{t} f
			=-2v^{-}_{1}\partial_{1} \partial_{t} f  -  c^{2} A^{T} \nabla h^{-}\cdot n - (v_{1}^{-})^{2} \partial^{2}_{11} f~~~on~ \Gamma.
		\end{aligned}
	\end{equation}
	
	Therefore summing up the $"+"$ equation \eqref{1.12} and $"-"$ equation \eqref{1.13} to get
	\begin{equation}\label{1.14}
		\begin{aligned}
			&\partial^{2}_{tt} f+ (v^{+}_{1}+v^{-}_{1})\partial_{1} \partial_{t} f +\frac{1}{2}((c^{+})^{2} A^{T} \nabla h^{+}\cdot n+(c^{-})^{2} A^{T} \nabla h^{-}\cdot n)\\
			&+ \frac{1}{2}((v_{1}^{+})^{2}+(v_{1}^{-})^{2}) \partial^{2}_{11} f=0 ~~~on~ \Gamma.
		\end{aligned}
	\end{equation}

	\subsection{The wave equation for $h$}
	\quad
	
	Applying the operator $\partial_{t} +\breve{v} \cdot \nabla$ to the first equation of \eqref{1.10} and $A^{T}\nabla \cdot$ to the second one gives
	\begin{equation}\label{1.15}
		\left\{
		\begin{aligned}
			&(\partial_{t} +\breve{v}\cdot \nabla)^{2} h+ (\partial_{t} +\breve{v}\cdot \nabla)A^{T}\nabla \cdot v=0,\\
			&A^{T}\nabla \cdot (\partial_{t} +\breve{v}\cdot \nabla)v+ A^{T}\nabla \cdot (c^{2} A^{T}\nabla h)= 0.
		\end{aligned}
		\right.
	\end{equation}
	
	Next, we take the difference of the two equations in \eqref{1.15} to deduce a wave-type equation:
	\begin{equation}\label{1.16}
		(\partial_{t} +\breve{v}\cdot \nabla)^{2} h-  A^{T}\nabla \cdot (c^{2} A^{T}\nabla h)=\mathcal{F}.
	\end{equation}
	where the term $\mathcal{F}=-[\partial_{t}+\breve{v}\cdot \nabla, A^{T}\nabla\cdot ]v$  is a lower order term in the second order differential equation for $h$.
	
	From the boundary conditions in \eqref{1.10}, we already know that 
	\begin{equation}\label{1.17}
		[h]=0~~on~\Gamma.
	\end{equation}

	To determine the value of $h$, we add another condition involving the normal derivatives of $h$ on the boundary $\Gamma$. More precisely, Taking  the difference of  two equations \eqref{1.12} and  \eqref{1.13}, we can  obtain the jump of  $c^{2}A^{T} \nabla h\cdot n$,
	\begin{equation}\label{1.18}
		[c^{2}A^{T} \nabla h\cdot n]=[-2v_{1}\partial_{1}\partial_{t} f- (v_{1})^{2} \partial^{2}_{11} f]~~on~\Gamma.
	\end{equation}

	Combining \eqref{1.16},  \eqref{1.17} with \eqref{1.18} gives a nonlinear system for $h$:
	\begin{equation}\label{1.19}
		\left\{
		\begin{aligned}
			&	(\partial_{t} +\breve{v}\cdot \nabla)^{2} h-  A^{T}\nabla \cdot (c^{2} A^{T}\nabla h)=\mathcal{F}&~~in~\Omega,\\
			&[h]=0&~~on~\Gamma,\\
			&[c^{2}A^{T} \nabla h\cdot n]=[-2v_{1}\partial_{1}\partial_{t} f- (v_{1})^{2} \partial^{2}_{11} f]&~~on~\Gamma.
		\end{aligned}
		\right.
	\end{equation}
	
	\subsection{History result}
	\par \quad \quad  In Chandrasekhar's book \cite{Chandrasekhar}, the stability problem of superposed fluids can be divided into two kinds, the first kind of instability arises when two fluids of different densities superposed one over the other (heavy fluid over light fluid), is called Rayleigh-Taylor instability. There are lot of works about mathematical analysis of the Rayleigh-Taylor instability problem (\cite{Castro}, \cite{Guo11}, \cite{Guo1},\cite{Guo2},\cite{Guo3}, \cite{Jiang}). Ebin in \cite{E2} proved the instability for the Rayleigh-Taylor problem of  the incompressible Euler equation, while Guo and Tice in \cite{Guo1} showed the instability of this problem for the compressible inviscid case. Moreover, the Rayleigh-Taylor instability for the viscous compressible fluids was proved in \cite{Guo2} and for the inhomogeneous Euler equation in \cite{Guo3}. The second type of instability arises when the different layers of stratified heterogeneous fluid are in relative horizontal motion. In this paper, we study the second kind.
	
	\par The stability problems of two fluids in a relative motion have  attracted a wide interest of researchers of various fields. This type of instability is well known as the Kelvin-Helmholtz instability which was first studied
	by Hermann von Helmholtz in  \cite{Helmholtz} and by William Thomson (Lord Kelvin) in \cite{Kelvin}. The Kelvin-Helmholtz (K-H) instability is important in understanding a variety of space and astrophysical phenomena involving sheared plasma flow such as the stability of the
	interface between the solar wind and the magnetosphere (\cite{Dungey},\cite{Parker}), interaction between adjacent streams of different velocities in the solar wind \cite{Sturrock}  and the dynamic structure of cometary tails \cite{Ershkovich}.

	For Kelvin-Helmholtz instability in the incompressible Euler flow,  Ebin in \cite{E2} proved linear and nonlinear ill-posedness of the  well-known Kelvin-Helmholtz problem. O. B\"uhler, J. Shatah, S. Walsh and ChongChun Zeng in \cite{Buhle} gave a complete proof of the instability criterion and gave a unified equation connecting the Kelvin–Helmholtz and quasi-laminar for  the incompressible Euler flow.
	Recently we prove  linear and nonlinear ill-posedness of the  Kelvin-Helmholtz problem for incompressible MHD fluids \cite{Xie} under the condition violating the Syrovatskij stability condition. On the other hand, for Kelvin-Helmholtz instability in the compressible Euler flow,  by the normal  mode analysis, it is showed in  \cite {Landau},\cite{Fejer}, \cite{Miles} that the linear KH instability can be inhibitied  when the Mach number  $M:=\frac{\bar{u}^{+}_{1}}{c}>\sqrt{2}$ and the interface is violently unstable when $M< \sqrt{2}$. The Kelvin-Helmholtz instability configuration is also  known in literature as the ‘vortex sheet’, as its vorticity distribution is described by a $\delta$-function supported by a discontinuity in the velocity field at the sheet location. In the pioneer works \cite{Coulombel}, \cite{Secchi}, Coulombel and Secchi proved the  nonlinear stability of vortex sheets and local-in-time existence of two-dimensional supersonic vortex sheets by using a micro-local analysis and Nash-Moser method. Later on, Morando, Trebeschi and Wang \cite{Trebeschi1}, \cite{Trebeschi2} generalized this result to cover the two-dimensional nonisentropic flows. Our aim in this paper is to prove ill-posedness of Kelvin-Helmholtz problem  for  the nonlinear  Euler fluids
	exhibit the same ill-posedness as their linearized counterparts in \cite{Fejer}, \cite{Miles} under the condition $\epsilon_{0}\leq M:=\frac{\bar{u}^{+}_{1}}{c}<\sqrt{2}$, where $\epsilon_{0}$ is a small but fixed number.
	
	\subsection{Definitions and Terminology}
	\quad	Before stating the main result, we define some notation that will be throughout the paper. Throughout
	the paper $C>0$ will denote a generic constant that can depend on the parameters
	of the problem, but does not depend on the data, etc. We refer to such
	constants as “universal.” They are allowed to change from one inequality to the
	next. We will employ the notation $a\lesssim b$ to mean that $a\leq C b$  for a universal
	constant $C>0$. Also the notation $a\lesssim b$ denotes $a\leq C b$.
	Meanwhile , we will use $\mathfrak{R}$ to denote the real part of a complex number or a complex function.
	
	Since we study two disjoint fluids, for a function $\psi$ defined $\Omega$ we write $\psi_{+}$ for the restriction to $\Omega_{+}$ and $\psi_{-}$ for the restriction to $\Omega_{-}$. For all $j\in \mathbb{R}$, We  define the piecewise Sobolev space by 
	\begin{equation}\label{1.20}
		H^{j}(\Omega): =\{  \psi| \psi^{+} \in H^{j}(\Omega^{+}), \psi^{-} \in H^{j}(\Omega^{-}) \},
	\end{equation}
	endowed with the norm $\|\psi\|_{H^{j}}^{2}= \|\psi^{+}\|_{H^{j}(\Omega^{+})}^{2}+\|\psi^{-}\|_{H^{j}(\Omega^{-})}^{2}$.
	The usual Sobolev norm $ \|\psi\|_{H^{j}(\Omega^{\pm})}^{2}$ is equipped with the following norm:
	\begin{equation}\label{1.21}
		\begin{aligned}
			\|\psi\|_{H^{j}(\Omega^{\pm})}^{2}: &=\sum_{s=0}^{j} \int_{\mathbb{R}\times I_{\pm}} (1+\eta^{2})^{j-s} |\partial_{2}^{s}\hat{\psi}_{\pm}(\eta,x_{2})|^{2} d \eta d x_{2}\\
			&= \sum_{s=0}^{j} \int_{\mathbb{R}} (1+\eta^{2})^{j-s} \|\partial_{2}^{s}\hat{\psi}_{\pm}(\eta,x_{2})\|^{2}_{L^{2}(I_{\pm})}  d \eta,
		\end{aligned}
	\end{equation}
	where $I_{+}=(-\infty,0)$ and $I_{-}=(0,\infty)$ and $\hat{\psi}$ is the Fourier transform of $ f$ via
	\begin{equation}\label{1.22}
		\hat{\psi}(\eta)=  \int_{\mathbb{R}} \psi e^{-i x_{1} \eta}  dx_{1}, 
	\end{equation}
	for a function $\psi$ defined $\Gamma$, we define usual Sobolev space by 
	\begin{equation}\label{1.23}
		\|\psi\|_{H^{j}(\Gamma)}^{2}: =  \int_{\mathbb{R}} (1+\eta^{2})^{j} |\hat{\psi}(\eta)|^{2} d \eta.
	\end{equation}
	To shorten notation, for $j \geq 0$ we define
	\begin{equation}\label{1.24}
		\|(f, h, v)(t)\|_{H^{j}}=\|f(t)\|_{H^{j}(\Gamma)}+\|h(t)\|_{H^{j}(\Omega)}+\|v(t)\|_{H^{j}(\Omega)}. 
	\end{equation}
	
	\subsection{Main result}
	\quad  This paper is devoted to proving the ill-posedness of  Kelvin-Helmholtz problem for Euler system under the destabilizing effect of velocity shear violating the supersonic stability condition:
	\begin{equation}\label{1.25}
		\epsilon_{0} \leq M:=\frac{\bar{v}^{+}_{1}}{c}<\sqrt{2},
	\end{equation}
	where $\epsilon_{0}$ is a small but fixed number.
	
	\begin{defi}
		We say that the  problem \eqref{1.10} is locally well-posedness for some $k \geq 3$ if there exist $\delta, t_{0}, C>0$  such that for any initial data $(f_{0}^{1}, h_{0}^{1}, v_{0}^{1})$, $(f_{0}^{2}, h_{0}^{2}, v_{0}^{2})$ satisfying
		\begin{equation}\label{5.10}
			\|(f_{0}^{1}-f_{0}^{2}, h_{0}^{1}-h_{0}^{2}, v_{0}^{1}-v_{0}^{2})\|_{H^{k}}<\delta,
		\end{equation}
		there exist unique solutions$(f^{1}, h^{1}, v^{1})$ and  $(f^{2}, h^{2}, v^{2})\in L^{\infty}([0,t_{0}]; H^{3})$ of \eqref{1.10} with initial data $(f^{j}, h^{j}, v^{j})|_{t=0}=(f^{j}_{0}, h^{j}_{0}, v^{j}_{0})$ and there holds
		\begin{equation}\label{5.11}
			\begin{aligned}
				&\sup_{0\leq t \leq t_{0}}	\|(f^{1}-f^{2}, h^{1}-h^{2}, v^{1}-v^{2})(t)\|_{H^{3}}\\
				&\leq C(\| (f_{0}^{1}-f_{0}^{2}, h_{0}^{1}-h_{0}^{2}, v_{0}^{1}-v_{0}^{2}) \|_{H^{k}}).
			\end{aligned}
		\end{equation}
	\end{defi}

	\begin{theo}\label{theorem:main}
		Let the initial domain to be $\Omega_{0}= \Omega^{+}_0 \cup \Omega^{-}_0 \cup \Gamma_0$. Suppose that the initial data satisfies the constraint  condition \eqref{1.777} and \eqref{1.1111}, further we assume  the  rectilinear solution  satisfies the instability condition \eqref{1.25}.  Then the Kelvin-Helmholtz problem of \eqref{1.10}  is not locally well-posed in the sense of Definition 1.2.
	\end{theo}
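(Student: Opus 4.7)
I will prove ill-posedness by the classical Hadamard normal-mode method, adapted from \cite{E2}, \cite{Guo1}, \cite{Xie}: construct a one-parameter family of linear growing modes with growth rate proportional to the tangential wavenumber $|\eta|$, and use them to contradict the continuous-dependence estimate \eqref{5.11}. The first step is to linearize \eqref{1.10} around the rectilinear solution $(\bar f=0,\,\bar h=0,\,\bar v^\pm=(\bar v_1^\pm,0))$ with $\bar v_1^+=-\bar v_1^-$. Since $A\to I$, $J\to 1$ and $n\to(0,1)$ in this limit, the wave system \eqref{1.19} and the boundary evolution \eqref{1.14} reduce to the constant-coefficient problem $(\partial_t+\bar v_1^\pm\partial_1)^2 h^\pm = c^2\Delta h^\pm$ in $\Omega^\pm$, with jump conditions $[h]=0$ and $[c^2\partial_2 h]=-4\bar v_1^+\partial_1\partial_t f$ on $\Gamma$, together with the linearized second-order equation for $f$ that couples $f$ to the normal derivatives of $h^\pm$ at $\Gamma$.

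Second, I substitute the normal-mode ansatz $f=B\,e^{\lambda t+i\eta x_1}$ and $h^\pm=A^\pm e^{\lambda t+i\eta x_1}e^{\mp\mu_\pm x_2}$, where $\mu_\pm^2=\eta^2+(\lambda\pm i\eta\bar v_1^+)^2/c^2$ and the branches are chosen so that $\mathfrak{R}\,\mu_\pm>0$, ensuring decay as $|x_2|\to\infty$. The jump and evolution equations then furnish a homogeneous $2\times 2$ linear system for $(A^+,B)$ whose vanishing determinant is the classical Miles--Fejer dispersion relation for the compressible vortex sheet. Under the scaling $\lambda=\eta\sigma$, $\mu_\pm=\eta\nu_\pm$, this relation becomes $\eta$-independent, and the analysis of \cite{Fejer}, \cite{Miles} identifies, for every $M\in[\epsilon_0,\sqrt 2)$, a root $\sigma_\ast=\sigma_\ast(M)\in\mathbb C$ with $\mathfrak{R}\,\sigma_\ast(M)>0$. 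Hence the linearized problem admits a family of exact solutions $(f_\eta,h_\eta,v_\eta)$ whose time growth rate $\mathfrak{R}\,\lambda_\eta=|\eta|\,\mathfrak{R}\,\sigma_\ast$ is unbounded as $|\eta|\to\infty$.

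Third, I pass from linear instability to nonlinear ill-posedness by the standard Hadamard argument. Assume for contradiction that \eqref{1.10} is well-posed in the sense of Definition 1.2 for some $k\ge 3$. Applying the continuity estimate \eqref{5.11} to the pair of solutions with initial data $\varepsilon(f_0,h_0,v_0)$ and the zero datum (the rectilinear state), dividing by $\varepsilon$, and passing to the limit $\varepsilon\to0$, one obtains the analogous estimate for the linearized flow around the rectilinear state. Plugging in a growing-mode datum at frequency $\eta_n\to\infty$ (localized in $x_1$ by a smooth truncation and already decaying in $x_2$ through the exponential factor so as to live in $H^k(\Omega)$), the $H^k$-norm of the datum grows only polynomially in $|\eta_n|$, while the $H^3$-norm of the corresponding linear solution at any fixed $t_0>0$ grows like $|\eta_n|^3\,e^{t_0|\eta_n|\,\mathfrak{R}\,\sigma_\ast}$; the exponential factor overwhelms any fixed polynomial factor as $n\to\infty$, yielding the required contradiction.

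The chief technical obstacle is the quantitative analysis of the dispersion relation: in the compressible case it is algebraic in $\sigma$ through the two branches $\nu_\pm(\sigma)$, whose branch cuts in the complex plane must be navigated carefully to isolate a root $\sigma_\ast$ with $\mathfrak{R}\,\sigma_\ast>0$ over the entire range $M\in[\epsilon_0,\sqrt 2)$, and to verify that the decay branches of $\mu_\pm$ are indeed selected. A secondary technical point is the rigorous derivation of the linearized continuity estimate from the nonlinear hypothesis, which amounts to a directional-differentiability statement for the nonlinear solution map at the rectilinear state and follows from standard functional-analytic considerations once the linearized problem is properly formulated.
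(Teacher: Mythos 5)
Your overall strategy (normal-mode/dispersion analysis of the linearization, then a Hadamard-type contradiction with the assumed Lipschitz estimate \eqref{5.11}) is the same as the paper's, and your dispersion step is essentially Lemma 3.2 of the paper (which in fact produces a \emph{real} positive root $\tau=X_1\eta$, $X_1^2=\sqrt{c^4+4c^2(\bar v_1^+)^2}-(\bar v_1^+)^2-c^2$, valid for all $0<M<\sqrt2$; the restriction $M\ge\epsilon_0$ only enters later, in the quantitative lower bounds such as \eqref{4.24}). The genuine gap is in your third step. You reduce the nonlinear theorem to ``the analogous estimate for the linearized flow,'' and justify that reduction as ``a directional-differentiability statement for the nonlinear solution map \dots standard functional-analytic considerations.'' Differentiability of the solution map is neither available nor assumed here, and the paper's Section 5 is precisely the work needed to avoid it: one takes data $\varepsilon(f_0^L,h_0^L,v_0^L)$, rescales the perturbations by $1/\varepsilon$, uses \eqref{5.11} to get the uniform bound \eqref{5.15}, derives uniform bounds on $\partial_t(\bar f^\varepsilon,\bar h^\varepsilon,\bar v^\varepsilon)$ from the rescaled equations (this requires controlling $B^\varepsilon$, $M^\varepsilon$ and the Taylor remainder $\mathcal R^\varepsilon$ as in \eqref{5.21}--\eqref{5.28}), extracts weak-* and, via Aubin--Lions, strong limits, and passes to the limit in the equations and in the jump conditions. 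Crucially, to conclude anything one must then identify the limit with the chosen growing solution $(f^L,h^L,v^L)$; this requires a \emph{uniqueness theorem for the linearized system} (the paper's Lemma 4.1, proved by an energy argument with an anisotropic trace estimate, using band-limited data and $\bar v_1^+\ge\epsilon_0 c$ --- this is where $\epsilon_0$ is really needed). Your proposal never mentions uniqueness of the linearized problem; without it the weak limit of the rescaled family need not be your growing mode, and the contradiction (in the paper, $2\le 1$ via weak-* lower semicontinuity, after fixing $n>C$ and $\alpha=2$) does not close.

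A secondary but real issue is your construction of the linear data: a single mode $e^{i\eta x_1}$ is not in $H^k(\Gamma)$, and truncating it ``by a smooth truncation'' in the physical variable $x_1$ destroys the property of being an exact solution of the linearized system, which you need when testing against the (would-be) linear estimate. The paper instead localizes in \emph{frequency}: it superposes modes with a cutoff $\chi_n(\eta)$ supported in the annulus $n\le|\eta|\le n+1$ and normalized as in \eqref{4.10}, so that $f_n,h_n,v_n$ remain exact solutions, the data are $O(1/n)$ in $H^j$, and the $H^k$ norms exceed any $\alpha$ after time $T_0$ because the growth rate $X_1\eta$ is unbounded on the support. You should adopt this frequency-packet construction, and, more importantly, supply the compactness-plus-uniqueness argument (or an equivalent rigorous substitute) for the passage from the nonlinear well-posedness hypothesis to the linear problem.
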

	
	\begin{rema}
		We construct  the growing normal mode solution for the front $f$ when $0<M:=\frac{\bar{v}^{+}_{1}}{C}<\sqrt{2}$.  While for the linear and nonlinear problem, we only can prove the ill-posedness of the solutions $h, v$ of the Kelvin-Helmholtz problem to the ideal compressible  flow when $\epsilon_0<M:=\frac{\bar{v}^{+}_{1}}{C}<\sqrt{2}$ due to some 
		technical reason, where $\epsilon_0$ is some fixed small enough positive constant. 
	\end{rema}
	\begin{rema}
		Since $\Psi: (t,x)\mapsto (t,x_{1}, x_{2}+ \psi(t,x)) $ are diffeomorphism transform, the ill-poseness of system \eqref{1.10} in the flatten coordinates implies the ill-poseness of the solution to the original system \eqref{1.1}.
	\end{rema}
	
	\begin{rema}
		 Our results also hold for three-dimensional space case (\cite{Fejer}), the  instability condition \eqref{1.25} becomes to 
		 	\begin{equation*}
		 	\epsilon_{0} \leq M:=\frac{\bar{v}^{+}_{1} cos \phi}{c}<\sqrt{2},
		 \end{equation*}
		 where $\epsilon_{0}$ is a small but fixed number and $\phi$ is an angle between the displacement with equilibrium position.
	\end{rema}

	\section{The Linearized Equations in new coordinates}
	\quad \quad In this section, we consider  a  linearized system in new coordinates. We are going to  construct a  growing normal mode solution for this linearized system. By taking Fourier trnasform of linearized system, we get a second order ordinary equation for $\hat{g}$.

	\subsection{Construction of a growing solution of the  linearized system.}
	\quad \quad  It is easily verified that the particular solution in Euler coordinates is also a  particular solution in new coordinates such that
	\begin{equation} \label{2.1}
		\bar{v}^{\pm}=\bar{u}^{\pm}=\left\{
		\begin{aligned}
			&(\bar{v}^{+}_{1},0)&x_2\ge 0,\\
			&(\bar{v}^{-}_{1},0)&x_2<0,
		\end{aligned}
		\right.
	\end{equation}
	and 
		\begin{equation}\label{2.111}
	\bar{\varrho}^{+}=\bar{\varrho}^{-}:=\bar{\rho}.
	\end{equation}
	Now we will consider a constant coefficient linearized equations which is derived by linearizing the equation \eqref{1.10}  around the constant velocity $\bar{v}^{\pm}=(\bar{v}^{\pm}_{1},0)$, constant pressure $\bar{h}^{+}=\bar{h}^{-}$, and flat front $\Gamma=\{x_{2}=0 \}$, i.e. $\bar{f}=0$, the  outer normal vector $\bar{n}=(0,1):=e_{2}$. The resulting linearized equations are
	\begin{equation}\label{2.2}
		\begin{cases}
			\partial_{t} h+\bar{v}_{1} \partial_{1} h+ {\rm div} v=0, & \text { in } \Omega, \\
			\partial_{t} v+\bar{v}_{1} \partial_{1} v+ c^{2} \nabla h=0, & \text { in } \Omega, \\
			\partial_{t} f=v_{2}-\bar{v}_{1} \partial_{1} f,   & \text { on } \Gamma.
		\end{cases}
	\end{equation}
	where $\bar{v}_{1}^{\pm}$ and $c^{2}=c^{2}(\bar{h})$ are constants.
	In order to linearize the jump conditions in \eqref{1.10}, we let $v=\bar{v}+ \tilde{v}$ and $n=e_{2}+ \tilde{n}$, we linearize the origin boundary condition $ [v\cdot n]=0$ as follows:
	\begin{equation*}
		[(\bar{v}+ \tilde{v})\cdot (e_{2}+ \tilde{n}) ]=[\tilde{v} \cdot e_{2}]+ [\bar{v}\cdot \tilde{n} ]+ [\tilde{v} \cdot \tilde{n}]=0,
	\end{equation*}
	where $\tilde{n}=(-\partial_{1}\tilde{f},0 )$. Obviously, the third term is nonlinear term, it follows that 
	\begin{equation*}
		[\tilde{v} \cdot e_{2}] =- [\bar{v}\cdot \tilde{n} ]= 2\bar{v}^{+}_{1}\partial_{1} \tilde{f}.
	\end{equation*}
	Thus, the jump conditions on the boundary linearize to 
	\begin{equation}\label{2.3}
		[h]=0,~[v\cdot e_{2}]= 2\bar{v}^{+}_{1}\partial_{1} f.
	\end{equation}

	We also get a linearized equation for the front $f$
	\begin{equation}\label{2.4}
		\begin{aligned}
			&\partial_{t}^{2}f+ (\bar{v}^{+}_{1})^{2} \partial^{2}_{11} f+ \frac{c^{2}}{2} \partial_{2} (h^{+}+h^{-})=0 ~~~on~ \Gamma,
		\end{aligned}
	\end{equation}
	and a linearized system for  the pressure $h$
	\begin{equation}\label{2.5}
		\left\{
		\begin{aligned}
			&	(\partial_{t} +\bar{v}_{1} \partial_{1})^{2} h- c^{2} \Delta h=0&~~on~ \Omega,\\
			&[h]=0&~~on~\Gamma,\\
			&[c^{2}\partial_{2} h]=-4\bar{v}^{+}_{1}\partial_{1}\partial_{t} f&~~on~ \Gamma.
		\end{aligned}
		\right.
	\end{equation}
	
	Since we want to construct a solution to the linear system \eqref{2.2}-\eqref{2.5} that has a growing $H^{k}$ norm for any $k$, i.e., we assume the solution is in the following normal mode form:
	\begin{equation}\label{2.6}
		h(t,x_{1},x_{2})= e^{\tau t} m(x_{1},x_{2}), v(t,x_{1},x_{2})= e^{\tau t} w(x_{1},x_{2}), f(t,x_{1})= e^{\tau t} g(x_{1}),
	\end{equation}
	here we assume that $\tau=\gamma+ i \delta \in \mathbb{C}\backslash \{0\}  $ is the same above and below the interface. A solution with $\mathfrak{R} (\tau)>0$ corresponds to a growing mode. Plugging the ansatz \eqref{2.6} into  \eqref{2.2}-\eqref{2.5}, we have 
	\begin{equation}\label{2.7}
		\begin{cases}
			\tau  m+\bar{v}_{1} \partial_{1} m+ {\rm div} w=0, & \text { in } \Omega, \\
			\tau w+\bar{v}_{1} \partial_{1} w+ c^{2} \nabla m=0, & \text { in } \Omega, \\
			\tau g=w_{2}-\bar{v}_{1} \partial_{1} g, \quad  [w\cdot e_{2}]= 2\bar{v}^{+}_{1}\partial_{1} g, \quad[m]=0, & \text { on } \Gamma, 
		\end{cases}
	\end{equation}
	and
	\begin{equation}\label{2.8}
		\begin{aligned}
			&\tau^{2}g+ (\bar{v}^{+}_{1})^{2} \partial^{2}_{11} g+ \frac{c^{2}}{2} \partial_{2} (m^{+}+m^{-})=0 ~~~on~ \Gamma,
		\end{aligned}
	\end{equation}
	and  
	\begin{equation}\label{2.9}
		\left\{
		\begin{aligned}
			&	(\tau +\bar{v}_{1} \partial_{1})^{2} m- c^{2} \Delta m=0&~~on~ \Omega,\\
			&[m]=0&~~on~ \Gamma,\\
			&[c^{2}\partial_{2} m]=-4\bar{v}^{+}_{1} \tau \partial_{1} g&~~on~\Gamma.
		\end{aligned}
		\right.
	\end{equation}
	
	\subsection{ The formula for $\partial_{2}\hat{m}^{+}+\partial_{2}\hat{m}^{-}$.}
	\quad  \quad  We  take the horizontal  Fourier transform to the equation \eqref{2.8} and \eqref{2.9} and  deduce the formula for $\partial_{2}\hat{m}^{+}+\partial_{2}\hat{m}^{-}$ on $\Gamma$, then substituting this formula into \eqref{2.8}, therefore we have an second-order equation for $g$ without coupling with other quantity.  To begin with,  we define the Fourier transform of $h$ and $f$ as follow:
	\begin{equation*}
		\hat{m}(\eta,x_{2})= \int_{\mathbb{R}} m(x_{1},x_{2}) e^{-ix_{1} \eta}  dx_{1} ,~ \hat{g}(\eta)= \int_{\mathbb{R}} g(x_{1}) e^{-i x_{1} \eta} dx_{1}.
	\end{equation*}

	Taking the Fourier transform to the equation \eqref{2.8}, \eqref{2.9} with respect with the horizontal variable to get
	\begin{equation}\label{2.10}
		\begin{aligned}
			\tau^{2}\hat{g}-(\bar{v}^{+}_{1})^{2} \eta^{2} \hat{g}+ \frac{c^{2}}{2} \partial_{2} (\hat{m}^{+}+\hat{m}^{-})=0~~on~ \Gamma,
		\end{aligned}
	\end{equation}
	and
	\begin{equation}\label{2.11}
		\left\{
		\begin{aligned}
			&(\tau +i\bar{v}_{1}\eta)^{2} \hat{m}+c^{2}\eta^{2} \hat{m}-  c^{2} \partial^{2}_{22} \hat{m}=0&~~on~ \Omega,\\
			&[\hat{m}]=0&~~on~ \Gamma,\\
			&[c^{2}\partial_{2} \hat{m}]= -4 i\bar{v}^{+}_{1} \eta \tau \hat{g}&~~on~ \Gamma.
		\end{aligned}
		\right.
	\end{equation}
	
	Solving the system  \eqref{2.11}, we obtain
	\begin{equation} \label{2.12}
		\hat{m}(\eta,x_{2})=\left\{
		\begin{aligned}
			& \frac{4 i\bar{v}^{+}_{1} \eta \tau \hat{g}}{c^{2}(\mu^{+}+\mu^{-})}e^{-\mu^{+}x_2}&x_2\ge 0,\\
			& \frac{4 i\bar{v}^{+}_{1} \eta \tau \hat{g}}{c^{2}(\mu^{+}+\mu^{-})}e^{\mu^{-}x_2}&x_2<0,
		\end{aligned}
		\right.
	\end{equation}
	where $\mu^{\pm}=\sqrt{\frac{(\tau\pm i\bar{v}^{+}_{1} \eta)^{2}}{c^{2}} + \eta^{2}}$ are the root of the equation
	\begin{equation}\label{2.13}
		c^{2} s^{2}-(\tau +i\bar{v}^{\pm}_{1}\eta)^{2}- c^{2}\eta^{2}=0,
	\end{equation}
	here we notice that $\mathfrak{R} \mu^{\pm}>0$ since $\mathfrak{R} \tau>0$.
	
	By direct computation, we can arrive at
	\begin{equation}\label{2.14}
		\partial_{2}\hat{m}^{+}+\partial_{2}\hat{m}^{-}=-\frac{4 i\bar{v}^{+}_{1} \eta \tau}{c^{2}}\hat{g} \frac{\mu^{+}-\mu^{-}}{\mu^{+}+\mu^{-}},~~on~ \Gamma.
	\end{equation}

	Plugging \eqref{2.14} into \eqref{2.10}, we get an second order equation for $\hat{g}$
	\begin{equation}\label{2.15}
		\begin{aligned}
			(\tau^{2}-(\bar{v}^{+}_{1})^{2} \eta^{2}-2i \bar{v}^{+}_{1} \eta \tau \frac{\mu^{+}-\mu^{-}}{\mu^{+}+\mu^{-}})\hat{g}=0,~~on~\Gamma.
		\end{aligned}
	\end{equation}
	Finally, the symbol of \eqref{2.15} is defined as follows
	\begin{equation}\label{3.1}
		\Sigma:= \tau^{2}-(\bar{v}^{+}_{1})^{2} \eta^{2}-2i \bar{v}^{+}_{1} \eta \tau \frac{\mu^{+}-\mu^{-}}{\mu^{+}+\mu^{-}}.
	\end{equation}

	\section{The  analysis of the  symbol  \eqref{3.1}}
	\quad  In this section, the analysis of the  symbol \eqref{3.1} is established in  the spirit of  Morando-Secchi-Trebeschi \cite{Morando}. To begin with, we define a set of "frequencies"
	\begin{equation}\label{3.2}
		\Xi= \{ (\tau,\eta)\in \mathbb{C} \times \mathbb{R}  : \mathfrak{R} \tau> 0, (\tau,\eta)\neq (0,0)\}.
	\end{equation}
	
	Since we already know that $\mathfrak{R} \mu^{\pm}>0$ in all points with $\mathfrak{R} \tau>0$. It follows that $\mathfrak{R} (\mu^{+}+ \mu^{-})>0$ and thus $\mu^{+}+ \mu^{-}>0$ in all such points. From \eqref{3.1}, the symbol $\Sigma$ is  defined in points $(\tau,\eta)\in \Xi$.
	
	We also need to know whether the difference $\mu^{+}-\mu^{-}$ vanishes.
	\begin{lemm}
		Let $(\tau,\eta)\in \Xi$. Then $\mu^{+}=\mu^{-}$ if and only if $(\tau,\eta)=(\tau,0)$.
	\end{lemm}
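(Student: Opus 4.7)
The plan is to work with the squared expressions $(\mu^{\pm})^2 = (\tau \pm i\bar{v}_1^+\eta)^2/c^2 + \eta^2$, which are genuinely holomorphic (no branch issues), and then transfer the conclusion back to the $\mu^\pm$ themselves using the sign condition $\mathfrak{R}\mu^\pm > 0$ recalled just before the lemma.

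For the easy direction ($\Leftarrow$), I would simply set $\eta = 0$ and observe that $(\mu^\pm)^2 = \tau^2/c^2$; since $\mathfrak{R}\tau > 0$ forces $\mathfrak{R}(\tau/c) > 0$, both $\mu^+$ and $\mu^-$ must equal the branch of $\sqrt{\tau^2/c^2}$ with positive real part, namely $\tau/c$, so $\mu^+ = \mu^-$.

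For the nontrivial direction ($\Rightarrow$), I would start from $\mu^+ = \mu^-$ and square both sides to obtain
\begin{equation*}
\frac{(\tau + i\bar{v}_1^+\eta)^2}{c^2} + \eta^2 \;=\; \frac{(\tau - i\bar{v}_1^+\eta)^2}{c^2} + \eta^2,
\end{equation*}
which simplifies after cancellation to $4 i\, \tau\, \bar{v}_1^+\, \eta / c^2 = 0$. Since $(\tau,\eta)\in\Xi$ forces $\tau \neq 0$ and the instability regime \eqref{1.25} ensures $\bar{v}_1^+ \neq 0$, this yields $\eta = 0$, as desired.

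The only subtlety — and the step I would be most careful about — is making sure that the implication really goes from $\mu^+ = \mu^-$ down to $(\mu^+)^2 = (\mu^-)^2$ without losing information, and conversely that $(\mu^+)^2 = (\mu^-)^2$ together with $\mathfrak{R}\mu^\pm > 0$ forces $\mu^+ = \mu^-$ rather than $\mu^+ = -\mu^-$. This is immediate once one notes that $\mu^+ = -\mu^-$ combined with $\mathfrak{R}\mu^\pm > 0$ would force $\mathfrak{R}\mu^+ = -\mathfrak{R}\mu^- < 0$, a contradiction. So there is no real obstacle; the proof is a one-line algebraic identity dressed up with a branch-of-square-root check.
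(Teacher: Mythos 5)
Your proposal is correct and follows essentially the same route as the paper: compare the squares $(\mu^{\pm})^{2}$ via \eqref{2.13}, note the difference is $4i\tau\bar{v}^{+}_{1}\eta/c^{2}$ so equality forces $\eta=0$ (since $\mathfrak{R}\tau>0$ rules out $\tau=0$), and then identify $\mu^{+}=\mu^{-}=\tau/c$ at $\eta=0$. Your explicit check that $\mathfrak{R}\mu^{\pm}>0$ excludes the branch $\mu^{+}=-\mu^{-}$ is a point the paper leaves implicit, but it is the same argument in substance.
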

	\begin{proof}
		From \eqref{2.13}, it implies that  $(\mu^{+})^{2}=(\mu^{-})^{2}$ if and only if $\eta=0$ or $\tau=0$. Since  $(\tau,\eta)\in \Xi$, only  $\eta=0$ case need to study. When $\eta=0$, it follows that $\mu^{+}=\mu^{-}= \tau/c$.
	\end{proof}

	Now we will discuss the roots of  the symbol \eqref{3.1} in the instability case.
	\begin{lemm}
		Let $\Sigma(\tau,\eta)$ be the symbol defined in \eqref{3.1}, for $(\tau, \eta) \in \Xi$.
		If $\bar{v}^{+}_{1}<\sqrt{2}c$, then $\Sigma(\tau,\eta)=0$ if only if
		\begin{equation}\label{3.27}
			\tau=  X_{1} \eta,
		\end{equation}
		where $	X_{1}^{2}=\sqrt{c^{4}+ 4c^{2}(\bar{v}^{+}_{1})^{2}}-(\bar{v}^{+}_{1})^{2}-c^{2}>0$. The root $	\tau=  X_{1} \eta$ is simple, i.e. there exists a neighborhood $\mathcal{V}$ of $(X_{1}\eta, \eta)\in \Xi$ and a smooth $F$ defined on  $\mathcal{V}$ such that 
		\begin{equation}\label{3.28}
			\Sigma= (\tau-X_{1}\eta) F(\tau,\eta),~F(\tau,\eta)\neq 0 ~\text{for all} (\tau,\eta)\in \mathcal{V}, 
		\end{equation}
		where $F(\tau, \eta)$ is defined as $c^{2}\eta^{2} \frac{d\phi }{d X}(\alpha X_1+ (1-\alpha) X))$.
	\end{lemm}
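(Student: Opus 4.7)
The plan is to exploit the degree-two homogeneity of $\Sigma$ in $(\tau,\eta)$ to reduce the problem to a scalar equation in $X=\tau/\eta$, then to rationalize the square roots in $\mu^{\pm}$ so that $\Sigma=0$ becomes a polynomial equation in $X^{2}$ which can be solved explicitly. Concretely, since $\mu^{\pm}$ is homogeneous of degree one and $(\mu^{+}-\mu^{-})/(\mu^{+}+\mu^{-})$ is homogeneous of degree zero, I would write $\Sigma(\tau,\eta)=\eta^{2}\phi(X)$ for $\eta\neq 0$, where
\[
\phi(X)=X^{2}-(\bar v_{1}^{+})^{2}-2i\bar v_{1}^{+}X\,\frac{\nu^{+}-\nu^{-}}{\nu^{+}+\nu^{-}}, \quad \nu^{\pm}=\sqrt{(X\pm i\bar v_{1}^{+})^{2}/c^{2}+1},
\]
chosen on the principal branch so that $\mathfrak{R}\,\nu^{\pm}>0$. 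The case $\eta=0$ is covered by Lemma~3.1: there $\mu^{+}=\mu^{-}$ and $\Sigma(\tau,0)=\tau^{2}\neq 0$ on $\Xi$, so every root must have $\eta\neq 0$.

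To rationalize I would recast $\phi(X)=0$ as $(X^{2}-Y^{2})(\nu^{+}+\nu^{-})=2iYX(\nu^{+}-\nu^{-})$ with $Y=\bar v_{1}^{+}$, and then use
\[
(\nu^{+})^{2}-(\nu^{-})^{2}=\tfrac{4iXY}{c^{2}},\qquad (\nu^{+})^{2}+(\nu^{-})^{2}=\tfrac{2(X^{2}-Y^{2}+c^{2})}{c^{2}},
\]
which are immediate from the defining relation for $\nu^{\pm}$, in order to square both sides and eliminate the radicals. Collecting the symmetric combinations $(\nu^{+}\pm\nu^{-})^{2}$ reduces the problem to the polynomial identity
\[
(X^{2}+Y^{2})^{2}+2c^{2}(X^{2}-Y^{2})=0.
\]
Viewed as a quadratic in $Z=X^{2}$, its roots are $Z_{\pm}=-(Y^{2}+c^{2})\pm\sqrt{c^{4}+4c^{2}Y^{2}}$. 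Only $Z_{+}$ is admissible: $Z_{-}<0$ forces $X$ purely imaginary and hence $\mathfrak{R}\,\tau=0$, while $Z_{+}>0$ precisely under the standing hypothesis $Y<\sqrt{2}c$. Setting $X_{1}=\sqrt{Z_{+}}$ recovers the formula in the lemma. A short sign check (using $X_{1}^{2}-Y^{2}<0$ and the sign of $\mathfrak{R}(\nu^{+}-\nu^{-})$ at $X=X_{1}$, $\eta>0$) confirms that squaring has not introduced a spurious root, so that $\phi(X_{1})=0$ holds in the original unsquared sense.

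For the simple-root claim I would Taylor-expand $\phi$ around $X_{1}$ with integral remainder,
\[
\phi(X)=(X-X_{1})\int_{0}^{1}\phi'\bigl(X_{1}+s(X-X_{1})\bigr)\,ds=(X-X_{1})G(X),
\]
and set $F(\tau,\eta)=\eta\,G(\tau/\eta)$, so that the identity $\eta^{2}(X-X_{1})=\eta(\tau-X_{1}\eta)$ gives $\Sigma=(\tau-X_{1}\eta)F$. The non-vanishing of $F$ on a neighborhood $\mathcal V$ of $(X_{1}\eta,\eta)$ reduces to $\phi'(X_{1})\neq 0$, which follows because $Z=X_{1}^{2}$ is a simple root of the polynomial $(Z+Y^{2})^{2}+2c^{2}(Z-Y^{2})$ (its $Z$-derivative at $X_{1}^{2}$ is $2(X_{1}^{2}+Y^{2}+c^{2})\neq 0$), combined with a continuity argument that transfers the simplicity from the rationalized polynomial to $\phi$.

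The main technical obstacle will be the bookkeeping in the squaring step: one has to track the principal branches of $\nu^{\pm}$ carefully enough to rule out spurious roots (showing $\phi(X_{1})=0$ rather than merely $|\phi(X_{1})|^{2}=0$), and to transfer simplicity of the polynomial root $Z=X_{1}^{2}$ to that of the transcendental root $X=X_{1}$ of $\phi$. Once the signs are tracked, the remainder is routine algebra in the spirit of the Morando--Secchi--Trebeschi computation.
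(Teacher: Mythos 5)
Your proposal is correct and takes essentially the same route as the paper: reduce by homogeneity to a scalar function $\phi(X)$ of $X=\tau/\eta$, rationalize to exactly the paper's quartic $X^{4}+2((\bar v^{+}_{1})^{2}+c^{2})X^{2}+(\bar v^{+}_{1})^{4}-2c^{2}(\bar v^{+}_{1})^{2}=0$, discard the second root $X_{2}^{2}$ as purely imaginary (hence outside $\Xi$), verify by a branch/sign check that $X_{1}$ is a genuine (non-spurious) root, and obtain simplicity from a Taylor expansion with $\phi'(X_{1})\neq 0$. The only cosmetic difference is that the paper first uses the identity $\frac{\mu^{+}-\mu^{-}}{\mu^{+}+\mu^{-}}=\frac{c^{2}(\mu^{+}-\mu^{-})^{2}}{4i\bar v^{+}_{1}\eta\tau}$ to rewrite $\Sigma=c^{2}(\mu^{+}\mu^{-}-\eta^{2})$, so a single squaring gives the quartic directly, whereas squaring your rearranged equation as literally described requires a second squaring and a factorization to strip the extraneous factors.
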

	
	\begin{proof}
		In according with the definition of $\Sigma$ and Lemma 3.1,  we can easily  verify $\Sigma(\tau,0)=\tau^{2}\neq 0$ for $(\tau,0)\in \Xi$.  Meanwhile, it is easy to check that $\Sigma(\tau, \eta)= \Sigma(\tau, -\eta)$. Thus we can assume without loss of generality that $\tau\neq 0$, $\eta\neq 0 $ and $ \eta>0$ and from Lemma 3.1 we know that $\mu^{+}- \mu^{-}\neq 0$. Therefore we compute
		\begin{equation}\label{3.22}
			\begin{aligned}
				\frac{\mu^{+}-\mu^{-}}{\mu^{+}+\mu^{-}}= \frac{(\mu^{+}-\mu^{-})^{2}}{(\mu^{+})^{2}-(\mu^{-})^{2}}= \frac{c^{2}(\mu^{+}-\mu^{-})^{2}}{4 i\bar{v}^{+}_{1}\eta \tau},
			\end{aligned}
		\end{equation}
		and
		\begin{equation}\label{3.23}
			\begin{aligned}
				(\mu^{+}-\mu^{-})^{2}= 2((\frac{\tau}{c})^{2}- (\frac{\bar{v}^{+}_{1}\eta}{c})^{2} +\eta^{2}- \mu^{+}\mu^{-}),
			\end{aligned}
		\end{equation}
		therefore we deduce that
		\begin{equation}\label{3.24}
			\begin{aligned}
				\frac{\mu^{+}-\mu^{-}}{\mu^{+}+\mu^{-}}=\frac{{\tau}^{2}- (\bar{v}^{+}_{1}\eta)^{2} +c^{2}(\eta^{2}- \mu^{+}\mu^{-})}{2 i\bar{v}^{+}_{1}\eta \tau },
			\end{aligned}
		\end{equation}
		and substituting the formula \eqref{3.24} into \eqref{2.15} we can rewrite it as
		\begin{equation}\label{3.25}
			\begin{aligned}
				c^{2}( \mu^{+}\mu^{-}-\eta^{2})\hat{g}=0,~~\text{on}~\Gamma,
			\end{aligned}
		\end{equation}
		the symbol $\Sigma$ can be  reformulated as
		\begin{equation}\label{3.26}
			\begin{aligned}
				\Sigma= c^{2}( \mu^{+}\mu^{-}-\eta^{2}).
			\end{aligned}
		\end{equation}

		Let us set $\mu^{+}\mu^{-}-\eta^{2}=0$ and introduce the quantities:
		\begin{equation}\label{3.29}
			X=\frac{\tau}{\eta},~\tilde{\mu}^{\pm}= \frac{\mu^{\pm}}{ \eta}.
		\end{equation}
		Therefore we can deduce
		\begin{equation} \label{3.30}
			\tilde{\mu}^{+}\tilde{\mu}^{-}=1,
		\end{equation}
		and
		\begin{equation}\label{3.31}
			(\tilde{\mu}^{+})^{2}(\tilde{\mu}^{-})^{2}=1.
		\end{equation}
		
		By the formula of the roots $\mu^{\pm}$, it follows that
		\begin{equation}\label{3.32}
			(\tilde{\mu}^{+})^{2}=\frac{(X+i\bar{v}^{+}_{1})^{2}}{c^{2}} +1,
		\end{equation}
		and
		\begin{equation}\label{3.33}
			(\tilde{\mu}^{-})^{2}=\frac{(X-i\bar{v}^{+}_{1})^{2}}{c^{2}}+1,
		\end{equation}
		then substituting \eqref{3.32} and \eqref{3.33} into \eqref{3.31}, we get
		\begin{equation}\label{3.34}
			[(X+i\bar{v}^{+}_{1})^{2}+c^{2}][(X-i\bar{v}^{+}_{1})^{2}+c^{2}]=c^{4},
		\end{equation}
		which leads to an quadratic equation for $X^{2}$:
		\begin{equation}\label{3.35}
			\begin{aligned}
				X^{4} + 2 ((\bar{v}^{+}_{1})^{2}+c^{2}) X^{2}+ (\bar{v}^{+}_{1})^{4} -2c^{2} (\bar{v}^{+}_{1})^{2} =0.
			\end{aligned}
		\end{equation}
		Using the quadratic root formula, the two roots of the  equation \eqref{3.35} are
		\begin{equation}\label{3.36}
			X_{1}^{2}=- (\bar{v}^{+}_{1})^{2}-c^{2}+ \sqrt{c^{4}+ 4c^{2}(\bar{v}^{+}_{1})^{2}},
		\end{equation}
		and
		\begin{equation}\label{3.37}
			X_{2}^{2}= -(\bar{v}^{+}_{1})^{2}-c^{2}-  \sqrt{c^{4}+ 4c^{2}(\bar{v}^{+}_{1})^{2}},
		\end{equation}
		We claim that the points $(\tau,\eta)\in \Sigma$ with $\tau=\pm  X_{2} \eta$ are not the roots of $\mu^{+}\mu^{-}= \eta^{2}$. Without loss of generality, we can assume that $Y_{2}$ is positive. From \eqref{3.37}, we deduce
		\begin{equation}\label{3.38}
			X_{2}=iY_{2},~Y_{2}= \sqrt{(\bar{v}^{+}_{1})^{2}+c^{2}+\sqrt{c^{4}+ 4c^{2}(\bar{v}^{+}_{1})^{2}}}\geq \bar{v}^{+}_{1}+c,
		\end{equation}
		from this  we deduce $ Y_{2}\pm \bar{v}^{+}_{1}>c$., in accord with the equation \eqref{3.32} and \eqref{3.33}, we deduce that $\tilde{\mu}^{+}= i \sqrt{\frac{(Y_{2}+\bar{v}^{+}_{1})^{2}}{c^{2}} -1},\tilde{\mu}^{-}=  i \sqrt{\frac{(Y_{2}-\bar{v}^{+}_{1})^{2}}{c^{2}} -1}$
		from which we know that   $\tilde{\mu}^{+}\tilde{\mu}^{-}=1$  is not satisfied. Similarly, we can show that $(\tau,\eta)\in \Sigma$ with $\tau=- X_{2}\eta$ is not root of $\mu^{+}\mu^{-}= \eta^{2}$. On the other hand, from \eqref{3.38}, we know that $\tau=iY_{2}\eta$ is imaginary root, thus it implies that $\mathfrak{R} \tau=0$ and  $(\pm X_{2}\eta, \eta)\nsubseteq \Xi$.

		Now we  focus on  the root $X_{1}^{2}$. If $\bar{v}^{+}_{1}<\sqrt{2}c$, from \eqref{3.36}, we know that $X_{1}^{2}$ is positive, it follows that $\tau = \pm  X_{1} \eta$ are real. The point $(-  X_{1} \eta,\eta)\nsubseteq \Xi$, thus we  omit this point, we only study the root $\tau = +  X_{1} \eta$. Using a fact that square roots of the complex number $a+ib$ are 
		\begin{equation}\label{3.39}
			\pm\{\sqrt{\frac{r+a}{2}} + i sgn(b)\sqrt{\frac{r-a}{2}}\},~r=|a+ib|,
		\end{equation}
		in our case we compute
		\begin{equation}\label{3.40}
			\mu^{+}=\sqrt{\frac{r+a}{2}} + i\sqrt{\frac{r-a}{2}},\mu^{-}=\sqrt{\frac{r+a}{2}} - i\sqrt{\frac{r-a}{2}},
		\end{equation}
		where 
		\begin{equation}\label{3.41}
			a=\frac{X^{2}_{1}- (\bar{v}^{+}_{1})^{2}+ c^{2}}{c^{2}} \eta^{2},~b= \frac{2X_{1}\bar{v}^{+}}{c^{2}} \eta^{2},
		\end{equation}
		so that $	\mu^{+}	\mu^{-}=r>0$, therefore we deduce that in case of $\bar{v}^{+}_{1}<\sqrt{2}c$,    the  root of  the symbol $\Sigma$ is  the point $(+ X_{1}\eta, \eta)$. In summary we can get a root $(\tau,\eta)$ with $\mathfrak{R} \tau>0$, which is a unstable solution. 
		
		Now we prove that the root $( X_{1}\eta,\eta)$ are simple. We define $\phi(X) = \tilde{\mu}^{+} \tilde{\mu}^{-}-1$, therefore we have $\Sigma=c^{2}\eta^{2}\phi(X) $. By Taylor formula, we can write 
		\begin{equation}\label{3.42}
			\Sigma= c^{2}\eta^{2} (\phi(X_{1})+ (X-X_{1}) \frac{d\phi }{d X}(\alpha X_1+ (1-\alpha) X)), 0<\alpha<1,
		\end{equation}
		by direct computation, we have 
		\begin{equation}\label{3.43}
			\phi(X_{1})= 0,~ \frac{d\phi }{d X}= \frac{2X/c}{\tilde{\mu}^{+} \tilde{\mu}^{-}} \{ (\frac{X}{c})^{2}+ (\bar{v}^{+}_{1}/c)^{2}+1\}.
		\end{equation}
		Since $\frac{d\phi }{d X} (X_{1})\neq  0$, by the continuity of  $\frac{d\phi }{d X}$, it follows that $\frac{d\phi }{d X} (\alpha X_1+ (1-\alpha) X))\neq  0$. Therefore we complete the proof of this lemma.
	\end{proof}
	
	\section{Ill-posedness of  solutions for the linear problem}
	\subsection{Uniqueness for the linearized equations \eqref{2.2}}
	\quad To begin with, we prove a uniqueness result for the linearized equations \eqref{2.2}. 
	\begin{lemm}
		Let $f,h,v$ be a solution to the linearized equations \eqref{2.2} with  $(f,h,v)|_{t=0}=0$. Then $(f,h,v)\equiv0$.
	\end{lemm}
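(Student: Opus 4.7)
The plan is to establish an energy estimate that closes into a Gronwall-type inequality, forcing the zero solution given zero initial data. The starting point is that \eqref{2.2} is a constant-coefficient symmetric hyperbolic system, so standard $L^2$-energy methods apply in the bulk, coupled with a transport estimate for $f$ on $\Gamma$.

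First I would multiply \subeqref{2.2}{1} by $c^2 h$ and \subeqref{2.2}{2} by $v$, sum, and integrate over $\Omega$. The transport terms $\bar{v}_1 \partial_1 \cdot$ vanish upon integration by parts in $x_1$ (since $\bar{v}_1$ is constant on each side and we integrate over $\mathbb{R}$), while the coupling contributions combine into $c^2 \dv (hv)$. Applying the divergence theorem on $\Omega^{\pm}$ separately---with outward normals $\mp e_2$ on $\Gamma$---and invoking the jump conditions \eqref{2.3} ($[h]=0$ and $[v_2] = 2\bar{v}_1^+ \partial_1 f$) yields
\begin{equation*}
	\frac{d}{dt}\, \frac{1}{2}\int_\Omega \bigl(|v|^2 + c^2 h^2\bigr) dx \;=\; 2\bar{v}_1^+ c^2 \int_\Gamma h\, \partial_1 f\, dx_1.
\end{equation*}
A parallel energy for $f$ on $\Gamma$ comes from \subeqref{2.2}{3}: differentiating $\tfrac{1}{2}\|f\|_{L^2(\Gamma)}^2$ in time and using integration by parts on the transport term reduces the time derivative to $\int_\Gamma f\, v_2^{+}\, dx_1$.

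The right-hand sides above are not controlled by the low-order energy, so the estimate must be run at higher tangential order. Since \eqref{2.2} has constant coefficients, any tangential derivative $\partial_1^m (h, v, f)$ solves the same system with jump conditions of the same form; iterating the procedure at each order, together with the trace theorem and Young's inequality (and recovering normal derivatives at $\Gamma$ algebraically from the equations, e.g.\ $\partial_2 v_2 = -(\partial_t + \bar{v}_1 \partial_1) h - \partial_1 v_1$ and $c^2 \partial_2 h = -(\partial_t + \bar{v}_1 \partial_1) v_2$), one obtains an inequality of the form
\begin{equation*}
	\frac{d}{dt}\, \mathcal{E}_k(t) \;\leq\; C_k \bigl(\mathcal{E}_k(t) + \mathcal{E}_{k+1}(t)\bigr),
\end{equation*}
for $\mathcal{E}_k(t) = \|f(t)\|_{H^k(\Gamma)}^2 + \|(h, v)(t)\|_{H^k(\Omega)}^2$.

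Because the initial energy $\mathcal{E}_k(0)$ vanishes at every order and the candidate solution has (by assumption) arbitrarily high but finite regularity, the iterated inequality $\mathcal{E}_k(t) \leq C_k \int_0^t (\mathcal{E}_k + \mathcal{E}_{k+1})(s)\, ds$ collapses to $\mathcal{E}_k(t) \equiv 0$ on $[0, t_0]$, and hence $(f, h, v) \equiv 0$. The main obstacle is precisely this loss of derivatives on the right-hand side: it is a genuine manifestation of the Kelvin--Helmholtz ill-posedness which is the paper's central subject, so one cannot hope for a derivative-loss-free Sobolev estimate. For \emph{uniqueness} alone, however, the loss is absorbed by the assumed regularity of the solution, and the Gronwall argument closes.
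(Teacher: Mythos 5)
Your energy identity is exactly the paper's starting point (the paper's \eqref{4.4}): the bulk transport terms drop out, the divergence structure leaves the interface term $2c^{2}\bar{v}^{+}_{1}\int_{\Gamma}h\,\partial_{1}f$, and the $f$-equation contributes $\int_{\Gamma}v_{2}f$. You also correctly identify the key obstruction, namely that these boundary terms lose a derivative (and a trace) relative to the $L^{2}$ energy. The gap is in how you close the estimate. The differential inequality $\frac{d}{dt}\mathcal{E}_{k}\leq C_{k}(\mathcal{E}_{k}+\mathcal{E}_{k+1})$ with $\mathcal{E}_{k}(0)=0$ does not ``collapse to zero'' merely because the solution is regular: iterating gives, roughly, $\mathcal{E}_{k}(t)\lesssim \frac{(Ct)^{J}}{J!}\sup_{[0,t]}\mathcal{E}_{k+J}$, and this tends to $0$ as $J\to\infty$ only if the constants $C_{k}$ and the higher norms $\mathcal{E}_{k+J}$ are controlled quantitatively in $J$ (e.g.\ geometric growth, as for band-limited or analytic data). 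For a general $C^{\infty}$ solution the Sobolev norms can grow super-factorially in the order and the scheme proves nothing; and, more to the point, the lemma is applied in the nonlinear argument to the weak-$*$ limit $(f^{\star},h^{\star},v^{\star})$, which lies only in $L^{\infty}([0,t_{0}];H^{3})$, so an argument requiring bounds at ``every order'' is not available there at all. In short, you have named the loss of derivatives but not actually removed it.

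The paper's resolution is different and stays at the $L^{2}$ level: it restricts to solutions that are band-limited at radius $R$ in the horizontal frequency (a legitimate reduction here because the problem has constant coefficients and a flat interface, so a tangential frequency cut-off commutes with \eqref{2.2}--\eqref{2.3}), so that one tangential derivative costs only a factor $R$, and it controls the boundary traces by interior $L^{2}$ quantities via the anisotropic trace estimate \eqref{4.114}, $\|\phi\|^{2}_{L^{2}(\Gamma)}\lesssim \|\bar{v}\cdot\nabla\phi\|_{L^{2}(\Omega)}\|\phi\|_{L^{2}(\Omega)}+\|\phi\|^{2}_{L^{2}(\Omega)}$, which uses $\bar v^{+}_{1}\geq \epsilon_{0}c>0$. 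Then Gronwall closes for each fixed $R$, giving vanishing of every frequency-truncated piece and hence of the solution. If you want to salvage your route, you would need either this kind of frequency truncation or an abstract Cauchy--Kowalevski/analyticity framework with explicit control of the constants in $k$; the trace estimate plus band-limiting is the mechanism that replaces your unjustified passage to infinitely many orders.
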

	\begin{proof}
		Taking  the standard inner product of the first and second equations in \eqref{2.2} with $h^{+}, v^{+}$ and integrating over $\Omega^{+}$, we obtain
		\begin{equation}\label{4.1}
			\frac{1}{2}	\partial_{t} \int_{\Omega^{+}}c^{2} |h^{+}|^{2}+\frac{1}{2} \int_{\Omega^{+}}\bar{v}_{1} \partial_{1}(c^{2} |h^{+}|^{2}) + \int_{\Omega^{+}} c^{2} h^{+} {\rm div} v^{+}=0  .
		\end{equation}
		and
		\begin{equation}\label{4.2}
			\frac{1}{2}	\partial_{t}  \int_{\Omega^{+}}|v^{+}|^{2}+\frac{1}{2} \int_{\Omega^{+}}\bar{v}^{+}_{1} \partial_{1}(|v^{+}|^{2}) + \int_{\Omega^{+}} c^{2} \nabla h^{+} v^{+}=0.
		\end{equation}
		The second terms on the left hand side of  \eqref{4.1} and \eqref{4.2} vanish, thus adding \eqref{4.1} and \eqref{4.2} and integrating by parts, we get
		\begin{equation}\label{4.111}
			\frac{1}{2}	\partial_{t} \int_{\Omega^{+}}(c^{2} |h^{+}|^{2}+|v^{+}|^{2} ) =c^{2}\int_{\Gamma} h^{+} v^{+} \cdot e_{2}  .
		\end{equation}
		A similar result holds on $\Omega_{-}$ with the opposite sign  on the right hand side: 
		\begin{equation}\label{4.1111}
			\frac{1}{2}	\partial_{t} \int_{\Omega^{-}}(c^{2}|h^{-}|^{2}+|v^{-}|^{2} ) =-c^{2} \int_{\Gamma} h^{-} v^{-} e^{2}  .
		\end{equation}

		Adding \eqref{4.111} and \eqref{4.1111} implies 
		\begin{equation}\label{4.112}
			\frac{1}{2}	\partial_{t} \int_{\Omega}(c^{2} |h|^{2}+|v|^{2} ) =c^{2} \int_{\Gamma} [h v \cdot e_{2} ]= 2c^{2} \int_{\Gamma} h \bar{v}^{+}_{1} \partial_{1} f.
		\end{equation}
		Also multiplying the third equation in \eqref{2.2} by $f$, we have 
		\begin{equation}\label{4.3}
			\frac{1}{2}	\partial_{t}  \int_{\Gamma}|f|^{2} = \int_{\Gamma} v_{2} f.
		\end{equation}
		Adding \eqref{4.112} and \eqref{4.3} and using the Holder inequality  yields
		\begin{equation}\label{4.4}
			\begin{aligned}
				&\frac{1}{2}	\partial_{t} \int_{\Omega}(c^{2} |h|^{2}+|v|^{2}) +	\frac{1}{2}	\partial_{t}  \int_{\Gamma}|f|^{2}\\
				&= 2c^{2} \int_{\Gamma} h \bar{v}^{+}_{1} \partial_{1} f+\int_{\Gamma} v_{2} f  \\
				&\leq 2c^{2}\bar{v}^{+}_{1}  \|h\|_{L^{2}(\Gamma)} \|\partial_{1} f\|_{L^{2}(\Gamma)}+ \| v_{2} \|_{L^{2}(\Gamma)} \|f\|_{L^{2}(\Gamma)}:=J.
			\end{aligned}
		\end{equation}
		To avoid the loss of derivatives, we suppose that the solutions are band-limited at radius $R>0$, i.e., that 
		$$	\underset{x_{2}\in \mathbb{R}}{\cup}supp(|\hat{f}(\cdot)|+ |\hat{h}(\cdot,x_{2})|+|\hat{v}(\cdot,x_{2})|) \subset B(0,R),$$
		also we introduce an   anisotropic trace estimate in Lemma B.1 (\cite{WangXin}):
		\begin{equation}\label{4.114}
			\| \phi\|^{2}_{L^{2}(\Gamma)} \leq C(\|\bar{v}\cdot \nabla \phi\|_{L^{2}(\Omega)}\| \phi\|_{L^{2}(\Omega)}+ \| \phi\|^{2}_{L^{2}(\Omega)}),
		\end{equation}
		where $|\bar{v}|= \bar{v}^{+}_{1}\geq \varepsilon_{0}c>0$.
		Now we estimate $J$ as follows:
		\begin{equation}\label{4.115}
			\begin{aligned}
				J&\lesssim (\|\bar{v}^{+}_{1}\partial_{1} h\|_{L^{2}(\Omega)}\| h\|_{L^{2}(\Omega)}+ \| h\|^{2}_{L^{2}(\Omega)}) ^{\frac{1}{2}}  \|\eta \hat{f}\|_{L^{2}(\Gamma)}\\
				&+  (\|\bar{v}^{+}_{1}\partial_{1} v_{2}\|_{L^{2}(\Omega)}\| v_{2}\|_{L^{2}(\Omega)}+ \| v_{2}\|^{2}_{L^{2}(\Omega)})^{\frac{1}{2}} \|f\|_{L^{2}(\Gamma)}\\
				&\lesssim ((\bar{v}^{+}_{1}R+1)R \| h\|^{2}_{L^{2}(\Omega)}) ^{\frac{1}{2}}  \| f\|_{L^{2}(\Gamma)}+  ((\bar{v}^{+}_{1}R+1) \| v_{2}\|^{2}_{L^{2}(\Omega)})^{\frac{1}{2}} \|f\|_{L^{2}(\Gamma)}.
			\end{aligned}
		\end{equation}
		Finally plugging \eqref{4.115} into \eqref{4.4} and taking use of Gronwall's inequality, for arbitrary $R$, we have 
		\begin{equation}\label{4.116}
			\| f\|^{2}_{L^{2}(\Gamma)} + \| h\|^{2}_{L^{2}(\Omega)}+ \| v\|^{2}_{L^{2}(\Omega)}\leq C(\| f_{0}\|^{2}_{L^{2}(\Gamma)} + \| h_{0}\|^{2}_{L^{2}(\Omega)}+ \| v_{0}\|^{2}_{L^{2}(\Omega)}).
		\end{equation}
		From this, we infer that if $(f,h,v)|_{t=0}=0$, then it follows that $(f,h,v)\equiv0$. 
		
	\end{proof}
	
	\subsection{Discontinuous dependence on the initial data}
	\quad  In according  with Lemma 3.2 and \eqref{3.36}, if $0<\bar{v}^{+}_{1}<\sqrt{2}c$, we deduce that $X_{1}^{2}$ is positive, it follows that $\tau =    X_{1} \eta$  are real. Also we can infer that the equation \eqref{2.4} reduces to the following form
	\begin{equation}\label{4.5}
		\partial_{t}^{2}f + 	\lambda \partial^{2}_{11} f=0,
	\end{equation}
	where $\lambda$ must be positive in the case of $0<\bar{v}^{+}_{1}<\sqrt{2}c$. In fact, plugging $f= e^{\tau t} g$ into  equation \eqref{4.5}, we get $\tau^{2} g + 	\lambda \partial^{2}_{11} g=0$. therefore  the corresponding Fourier transform  form is
	\begin{equation}\label{4.6}
		(\tau^{2} -	\lambda \eta^{2}) \hat{g} =0,
	\end{equation}
	which yields   $\lambda = \frac{\tau^{2}}{\eta^{2}}$. From Lemma 3.2, we know that  $X_{1}^{2}=\frac{\tau^{2}}{\eta^2}>0$ in the case of $0<\bar{v}^{+}_{1}<\sqrt{2}c$. Thus we have $\lambda=X_{1}^{2}>0$. Therefore \eqref{4.5} is  a elliptic equation. Clearly   the solutions of \eqref{4.5} are  linear combination of the real and imaginary parts  of  function 
	\begin{equation}\label{4.7}
		f=e^{\sqrt{\lambda} k t} e^{i k x},
	\end{equation}
	where $k$ is a positive wave number.

	We are now in a position to prove  ill-posedness for this linear problem in the following lemma:
	\begin{lemm}
		In the case of $\epsilon_{0} \leq M:=\frac{\bar{v}^{+}_{1}}{c}<\sqrt{2}$,	the linear problem \eqref{2.2} with the correesponding jump boundary conditions \eqref{2.3} is ill-posed in the sense of Hadamard in $H^{k}(\Omega)$ for every $k$. More precisely, for any $k, j\in \mathbb{N}$ with $j\geq k$ and for any $T_{0}>0$ and $\alpha>0$ there exists a sequence $\{ (f_{n}, v_{n}, h_{n}) \}_{n=1}^{\infty}$ to \eqref{2.2}, satisfying jump boundary conditions \eqref{2.3}, so that
		\begin{equation}\label{4.8}
			\| (f_{n}(0), h_{n}(0), v_{n}(0))\|_{H^{j}} \lesssim \frac{1}{n},
		\end{equation}
		but
		\begin{equation}\label{4.9}
			\|(f_{n},h_{n}, v_{n})\|_{H^{k}}\geq \alpha,~for~all~t\geq T_{0}.
		\end{equation}
	\end{lemm}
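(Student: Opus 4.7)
The plan is to feed the growing normal mode of Lemma 3.2 into a sequence of wave packets concentrated at higher and higher frequencies. Since $\bar v_1^+<\sqrt{2}\,c$, for each positive wave number $\eta$ the value $\tau(\eta):=X_1\eta$ is a simple real positive root of the symbol $\Sigma$; hence by \eqref{2.12} and the momentum equation of \eqref{2.7} one obtains an explicit growing-mode solution of the linearised system \eqref{2.2}--\eqref{2.3} associated with any such $\eta$. By superposing these modes in a narrow frequency band of centre $k_n\to\infty$ and multiplying by a small amplitude $\epsilon_n\to 0$, one produces genuine solutions whose initial $H^j$ norm is arbitrarily small but whose $H^k$ norm at any positive time carries an exponential amplification $e^{X_1 k_n T_0}$. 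Balancing $\epsilon_n$ against this growth will yield the required failure of continuous dependence.

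Concretely, I would fix a nonzero bump $\chi\in C_c^\infty(\mathbb{R})$ supported in $(1,2)$ and, for parameters $\epsilon_n>0$ and $k_n\to\infty$ to be chosen, set on the Fourier side
\begin{equation*}
\hat f_n(t,\eta):=\epsilon_n\, e^{X_1\eta t}\,\chi(\eta/k_n),\qquad \hat h_n^\pm(t,\eta,x_2):=e^{X_1\eta t}\,\hat m_n^\pm(\eta,x_2),
\end{equation*}
where $\hat m_n^\pm$ is given by \eqref{2.12} with $\tau=X_1\eta$ and $\hat g$ replaced by $\epsilon_n\chi(\eta/k_n)$. The velocity $\hat v_n^\pm(t,\eta,x_2)=e^{X_1\eta t}\hat w_n^\pm(\eta,x_2)$ is then read off from the momentum equation of \eqref{2.7}, namely $(X_1+i\bar v_1^\pm)\eta\,\hat w_n^\pm=-c^2(i\eta,\partial_2)^T\hat m_n^\pm$. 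By construction, and by the derivation of the symbol $\Sigma$ in Section 3, the triple $(f_n,h_n,v_n)$ solves \eqref{2.2} strongly and satisfies both the jump conditions \eqref{2.3} and the initial-data constraint \eqref{1.1111}.

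To estimate the norms, I would observe that on the support of $\chi(\cdot/k_n)$ we have $\eta\asymp k_n$ and, by the explicit formulas \eqref{3.40}--\eqref{3.41}, $|\mu^\pm|\asymp k_n$ with $\mathfrak{R}\,\mu^\pm\gtrsim k_n$. A direct Plancherel calculation then gives
\begin{equation*}
\|f_n(t)\|_{H^k(\Gamma)}^2 \asymp \epsilon_n^2\int_{k_n}^{2k_n} e^{2X_1\eta t}\eta^{2k}\,d\eta \asymp \epsilon_n^2\, k_n^{2k+1}\, e^{2X_1 k_n t},
\end{equation*}
together with analogous estimates for $\|h_n(t)\|_{H^k(\Omega)}^2$ and $\|v_n(t)\|_{H^k(\Omega)}^2$ (the additional $x_2$-integration contributing a factor $(\mathfrak{R}\,\mu^\pm)^{-1}\asymp k_n^{-1}$, absorbed in the remaining polynomial in $k_n$). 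Setting $\epsilon_n:=n^{-1} k_n^{-(j+1)}$ yields $\|(f_n,h_n,v_n)(0)\|_{H^j}\lesssim 1/n$, while for $t\ge T_0$
\begin{equation*}
\|(f_n,h_n,v_n)(t)\|_{H^k}\gtrsim \frac{e^{X_1 k_n T_0}}{n\, k_n^{j-k}},
\end{equation*}
which exceeds any prescribed $\alpha$ once $k_n$ is taken sufficiently large; monotonicity of the exponential factor in $t$ ensures the bound persists for all $t\ge T_0$.

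The main obstacle is bookkeeping rather than conceptual: one must check that the single-frequency ansatz really produces an exact solution of the full linearised system (not merely of the boundary equation for $f$) and control the complex-valued $\mu^\pm$ uniformly on the support of the wave packet when passing to $L^2(\Omega^\pm)$ norms of $h_n^\pm$ and $v_n^\pm$. Lemma 3.1 and the simplicity assertion in Lemma 3.2 guarantee that $\mu^++\mu^-$ and $\mu^+-\mu^-$ stay away from $0$, and $\mathfrak{R}\,\mu^\pm\gtrsim k_n$ gives uniform polynomial control of all symbol ratios in $k_n$. Once these uniform bounds are in place, the exponential factor $e^{X_1 k_n T_0}$ dominates and the construction closes.
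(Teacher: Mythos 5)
Your proposal is correct and follows essentially the same route as the paper: both construct exact normal-mode solutions with $\tau=X_{1}\eta$ from Lemma 3.2 and the explicit formulas \eqref{2.12} and \eqref{2.7}, localize them in a high-frequency band (your scaled bump $\chi(\eta/k_{n})$ with amplitude $\epsilon_{n}$ plays the role of the paper's annular $\chi_{n}$ with its built-in normalization \eqref{4.10}), and then let the exponential factor $e^{X_{1}\eta t}$ beat the polynomial frequency losses, using the condition $\epsilon_{0}\le M<\sqrt{2}$ to keep the symbol ratios such as $|(\mu^{+}-\mu^{-})/\mu^{\pm}|$ bounded below, exactly as in the paper's estimate \eqref{4.24}.
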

	
	\begin{proof}
		For any $j\in \mathbb{N}$, we let $\chi_{n}(\eta)\in C_{c}^{\infty}(\mathbb{R})$ be a real-valued function so that  $supp(\chi_{n}) \subset B(0,n+1)\backslash B(0,n)$ and 
		\begin{equation} \label{4.10}
			\int_{\mathbb{R}} (1+ |\eta|^{2})^{j+1}	|\chi_{n}(\eta)|^{2} d \eta= \frac{1}{\bar{C_{j}}^{2} n^{2}}.
		\end{equation}
		We define 
		\begin{equation} \label{4.11}
			f_{n}(t,x_{1})= e^{\tau t} g_{n}(x_{1})= \frac{1}{4\pi^2}\int_{\mathbb{R}}e^{X_{1} \eta t} \chi_{n}(\eta)  e^{i \eta x_{1}}d \eta,
		\end{equation}
		which solves \eqref{4.5}. Here we make use of  $\tau=X_{1} \eta$ in according with Lemma 3.9, meanwhile we can see that  $\hat{g}_{n}=\chi_{n}(\eta)$. From this, we can see that the linearized front equation is qualitatively more unstable for large frequencies $\eta$. 	Since $\eta\rightarrow \infty$, the solutions \eqref{4.5} with  a higher frequency grow faster in time, which provides a mechanism for Kelvin-Helmholtz instability.   By the choice of $\chi_{n}$ and Plancherel theorem, we have the estimate
		\begin{equation} \label{4.12}
			\begin{aligned}
				&\|f_{n}(t=0,x_{1})\|_{H^{j}(\Gamma)}=\|g_{n}(x_{1})\|_{H^{j}(\Gamma)} \\
				&= (\int_{\mathbb{R}}  (1+ |\eta|^{2})^{j}	|\chi_{n}(\eta)|^{2} d \eta)^{1/2}\lesssim \frac{1}{n},
			\end{aligned}
		\end{equation}
		meanwhile  for $ n+1\geq \eta\geq n$ and $t\geq T_{0}$, we get
		\begin{equation} \label{4.13}
			\begin{aligned}
				\|f_{n}(t,x_{1})\|^{2}_{H^{k}(\Gamma)} &\geq e^{ 2X_{1} n T_{0}} \int_{\mathbb{R}}  (1+ |\eta|^{2})^{k}	|\chi_{n}(\eta)|^{2} d \eta\\
				&\geq  \frac{e^{2X_{1} n T_{0}}}{ (1+(n+1)^{2})^{j-k+1}}  \int_{\mathbb{R}}  (1+\eta^{2})^{j+1}    |\chi_{n}(\eta)|^{2} d\eta.
			\end{aligned}
		\end{equation}
		Let $n$ be sufficiently large so that 
		\begin{equation}\label{4.14}
			\frac{e^{2X_{1} n T_{0}}}{ (1+(n+1)^{2})^{j-k+1}} \geq \alpha ^{2}\bar{C_{j}}^{2} n^{2},
		\end{equation}
		thus we may estimate 
		\begin{equation}\label{4.15}
			\| f_{n}(t)\|_{H^{k}(\Gamma)}\geq \alpha.
		\end{equation}

		From the computation in section 2.2, we know that 
		\begin{equation} \label{4.16}
			\hat{m}_{n}(\eta,x_{2})=\left\{
			\begin{aligned}
				& \frac{4 i \bar{v}^{+}_{1} \eta \tau }{c^{2}(\mu^{+}+\mu^{-})}\hat{g}_{n}(\eta)e^{-\mu^{+}x_2}&x_2\ge 0,\\
				& \frac{4 i \bar{v}^{+}_{1} \eta \tau  }{c^{2}(\mu^{+}+\mu^{-})}\hat{g}_{n}(\eta)e^{\mu^{-}x_2}&x_2<0.
			\end{aligned}
			\right.
		\end{equation}
		Since  $\tau=X_{1}\eta>0$ and $\eta>0$, from lemma 3.1 we know that $\mu^{+}-\mu^{-}\neq 0$, then \eqref{4.16} can be rewritten as
		\begin{equation}\label{4.17}
			\hat{m}_{n}(\eta,x_{2})=\left\{
			\begin{aligned}
				& (\mu^{+}-\mu^{-})\hat{g}_{n}(\eta)e^{-\mu^{+}x_2}&x_2\ge 0,\\
				& (\mu^{+}-\mu^{-})\hat{g}_{n}(\eta)e^{\mu^{-}x_2}&x_2<0,
			\end{aligned}
			\right.
		\end{equation}
		here we note that $\mu^{\pm}$ only depend on $\eta$, since we get $\tau= X_{1}\eta$, therefore it implies that $\mu(\tau,\eta)= \mu(X_{1}\eta,\eta)$.

		By the Plancherel theorem and \eqref{4.16},  we have 
		\begin{equation}\label{4.18}
			\begin{aligned}
				&\| h_{n}(t,x_{1},x_{2})\|^{2}_{H^{k}(\Omega)}= \| e^{\tau t} m_{n}(x_{1},x_{2})\|^{2}_{H^{k}(\Omega)}\\
				&\geq    \int_{\mathbb{R}}  (1+\eta^{2})^{k}|\mu^{+}-\mu^{-}|^{2}|e^{\tau t} \hat{g}_{n}(\eta)|^{2} \int_{0}^{\infty}e^{-2\mu^{+}x_2} dx_{2} d \eta \\
				&+  \int_{\mathbb{R}}  (1+\eta^{2})^{k} |\mu^{+}-\mu^{-}|^{2} |e^{\tau t} \hat{g}_{n}(\eta)|^{2} \int_{-\infty}^{0}e^{2\mu^{-}x_2} dx_{2}  d \eta \\
				& \geq  \frac{1}{2}\int_{\mathbb{R}}  (1+\eta^{2})^{k} |\frac{\mu^{+}-\mu^{-}}{\mu^{+ }}|^{2} |\mu^{+}| e^{2 X_{1} \eta t} |\chi_{n}(\eta)|^{2} d\eta\\
				&+ \frac{1}{2}\int_{\mathbb{R}}  (1+\eta^{2})^{k}   |\frac{\mu^{+}-\mu^{-}}{\mu^{-}}|^{2} |\mu^{-}| e^{2 X_{1} \eta t} |\chi_{n}(\eta)|^{2} d \eta,
			\end{aligned}
		\end{equation}
		then we deduce that 
		\begin{equation}\label{4.19}
			|\frac{\mu^{+}-\mu^{-}}{\mu^{+ }}|^{2}= \frac{|2i\sqrt{\frac{r-a}{2}} |^{2}}{|\sqrt{\frac{r+a}{2}} + i\sqrt{\frac{r-a}{2}}|^{2}} =2\frac{r-a}{r}.
		\end{equation}
		Making using of $\epsilon_{0}\leq M:= \frac{\bar{v}^{+}_{1}}{c}< \sqrt{2}$ and $	X_{1}^{2}=\sqrt{c^{4}+ 4c^{2}(\bar{v}^{+}_{1})^{2}}-(\bar{v}^{+}_{1})^{2}-c^{2}$, we  get
		\begin{equation}\label{4.20}
			a= \frac{X^{2}_{1}- (\bar{v}^{+}_{1})^{2}+ c^{2}}{c^{2}} \eta^{2}= (\sqrt{1+4M^{2}}-2 M^{2})\eta^{2},
		\end{equation}
		where we estimate $a$ as follows:
		\begin{equation}\label{4.21}
			-\eta^{2}<a\leq  (\sqrt{1+4\epsilon_{0}^{2}}-2 \epsilon_{0}^{2})\eta^{2} ,~if~ \epsilon_{0}\leq M < \sqrt{2}.
		\end{equation}
		Also we compute
		\begin{equation}\label{4.22}
			\begin{aligned}
				&|\mu^{+}|= |\sqrt{\frac{r+a}{2}} + i\sqrt{\frac{r-a}{2}}| =\sqrt{ r},\\
				&|\mu^{-}|= |\sqrt{\frac{r+a}{2}} - i\sqrt{\frac{r-a}{2}}| = \sqrt{ r}.
			\end{aligned}
		\end{equation}
		In according with \eqref{3.36} and \eqref{3.41}, it implies that 
		\begin{equation}\label{4.23}
			\begin{aligned}
				r^{2}&=a^{2}+b^{2}\\
				&= \frac{X_{1}^{4}+ 2((\bar{v}^{+}_{1})^{2}+ c^{2})X_{1}^{2} +(\bar{v}^{+}_{1})^{4}- 2c^{2}(\bar{v}^{+}_{1})^{2} +c^{4} }{c^{4}} \eta^{4}=\eta^{4}.
			\end{aligned}
		\end{equation}
		Finally, combining with \eqref{4.19}, \eqref{4.20}, \eqref{4.21} and  \eqref{4.23} implies that 
		\begin{equation}\label{4.24}
			\tilde{C}:=2-2(\sqrt{1+4\epsilon_{0}^{2}}-2 \epsilon_{0}^{2}) \leq 	|\frac{\mu^{+}-\mu^{-}}{\mu^{+ }}|^{2} <4, ~if~ \epsilon_{0}\leq M < \sqrt{2},
		\end{equation}
		here we remark that  $\epsilon_{0}\leq M$ must be satisfied,  $\epsilon_{0}$ is a small but fixed number. Because if March number $M$ tend to zero, this lower bound tend to zero.
		
		Therefore, employing \eqref{4.24}, \eqref{4.17} and \eqref{4.22},    we estimate $\| h_{n}(0)\|_{H^{k}(\Omega)}$ as follows
		\begin{equation}\label{4.25}
			\begin{aligned}
				&\| h_{n}(t=0,x_{1},x_{2})\|^{2}_{H^{j}(\Omega)}= \|  m_{n}(x_{1},x_{2})\|^{2}_{H^{j}(\Omega)}\\
				&\leq   \sum_{s=0}^{j} \int_{\mathbb{R}}  (1+\eta^{2})^{j-s}|(\mu^{+}-\mu^{-})|^{2}| \hat{g}_{n}(\eta)|^{2} \int_{0}^{\infty}|\partial_{2}^{s} e^{-\mu^{+}x_2}|^{2} dx_{2} d \eta \\
				&+   \sum_{s=0}^{j}\int_{\mathbb{R}}  (1+\eta^{2})^{j-s} |(\mu^{+}-\mu^{-})|^{2} | \hat{g}_{n}(\eta)|^{2} \int_{-\infty}^{0}|\partial_{2}^{s}e^{\mu^{-}x_2}|^{2} dx_{2}  d \eta \\
				& \leq  \frac{1}{2} \sum_{s=0}^{j}\int_{\mathbb{R}}  (1+\eta^{2})^{j-s} |\frac{(\mu^{+}-\mu^{-})}{\mu^{+ }}|^{2} |\mu^{+}|^{2s+1}  |\chi_{n}(\eta)|^{2} d\eta\\
				&+ \frac{1}{2} \sum_{s=0}^{j} \int_{\mathbb{R}}  (1+\eta^{2})^{j-s}   |\frac{(\mu^{+}-\mu^{-})}{\mu^{-}}|^{2} |\mu^{-}|^{2s+1} |\chi_{n}(\eta)|^{2} d \eta\\
				& \leq 4(j+1) \int_{\mathbb{R}}  (1+\eta^{2})^{j+1}  |\chi_{n}(\eta)|^{2} d\eta  \lesssim \frac{1}{n}.
			\end{aligned}
		\end{equation}
		
		Meanwhile  for $\eta\geq n \geq 1$ and $t\geq T_{0}$, we may estimate \eqref{4.18} as follows
		\begin{equation}\label{4.26}
			\begin{aligned}
				\| h_{n}(t)\|^{2}_{H^{k}(\Omega)}& \geq  \tilde{C}\frac{e^{2 X_{1}n T_{0}}}{ 1+(n+1)^{j-k+1}}   \int_{\mathbb{R}}  (1+\eta^{2})^{j+1}    |\chi_{n}(\eta)|^{2} d\eta,
			\end{aligned}
		\end{equation}
		Let $n$ be sufficiently large so that 
		\begin{equation}\label{4.27}
			\tilde{C}\frac{e^{2 X_{1}n T_{0}}}{ 1+(n+1)^{j-k+1}} \geq \alpha^{2} n^{2} \bar{C}_{j}^{2}.
		\end{equation}
		Hence 
		we may estimate 
		\begin{equation}\label{4.28}
			\| h_{n}(t)\|_{H^{k}(\Omega)}\geq \alpha.
		\end{equation}
		
		Taking the horizontal Fourier transform of  the second equation in \eqref{2.2}, we arrive
		\begin{equation}\label{4.29}
			(\tau+i  \bar{v}_{1} \eta)  \hat{v}_{1}+ c^{2} i \eta  \hat{h}=0,
		\end{equation}
		and
		\begin{equation}\label{4.30}
			(\tau+i  \bar{v}_{1} \eta)  \hat{v}_{2}+ c^{2} \partial_{2} \hat{h}=0,
		\end{equation}
		we directly compute to find
		\begin{equation}\label{4.31}
			\hat{v}_{n,1}(t,\eta, x_{2})=\left\{
			\begin{aligned}
				& \frac{ (\mu^{+}-\mu^{-})c^{2} i \eta }{\tau+ i\bar{v}^{+}_{1} \eta }\hat{f}_{n}(t,\eta)e^{-\mu^{+}x_2}&x_2\ge 0,\\
				&\frac{ (\mu^{+}-\mu^{-})c^{2} i \eta }{\tau- i\bar{v}^{+}_{1} \eta }\hat{f}_{n}(t,\eta)e^{\mu^{-}x_2}&x_2<0,
			\end{aligned}
			\right.
		\end{equation}
		and
		\begin{equation}\label{4.32}
			\hat{v}_{n,2}(t,\eta, x_{2})=\left\{
			\begin{aligned}
				& \frac{(\mu^{+}-\mu^{-})c^{2} \mu^{+} }{(\tau+ i\bar{v}^{+}_{1} \eta )}\hat{f}_{n}(t,\eta)e^{-\mu^{+}x_2}&x_2\ge 0,\\
				& -\frac{(\mu^{+}-\mu^{-})c^{2} \tau\mu^{-} }{(\tau-i\bar{v}^{+}_{1} \eta )}\hat{f}_{n}(t,\eta)e^{\mu^{-}x_2}&x_2<0,
			\end{aligned}
			\right.
		\end{equation}
		then we may  estimate  $|\frac{i\eta}{\tau+ i\bar{v}^{+}_{1} \eta )} |$ and $|\frac{\mu^{+}}{\tau+ i\bar{v}^{+}_{1} \eta} |$ as follows:
		\begin{equation}\label{4.33}
			\begin{aligned}
				|\frac{i\eta}{\tau+ i\bar{v}^{+}_{1} \eta )} |^{2}&= \frac{1}{X^{2}_{1} + (\bar{v}^{+}_{1})^{2} }= \frac{1}{\sqrt{c^{4}+ 4c^{2}(\bar{v}^{+}_{1})^{2}}-c^{2}} \\
				&= \frac{1}{c^{2}  (\sqrt{1+4M^{2}}-1)}. 
			\end{aligned}
		\end{equation}
		Since  $\epsilon_{0}\leq M < \sqrt{2}$, we know that 
		\begin{equation}\label{4.34}
			\begin{aligned}
				\frac{1}{\sqrt{2}c} 	\leq 	|\frac{i\eta}{\tau+ i\bar{v}^{+}_{1} \eta )} |\leq  \frac{1}{c\sqrt{(\sqrt{1+4\epsilon_{0}^{2}}-1)}}.
			\end{aligned}
		\end{equation}

		The estimate \eqref{4.22} and \eqref{4.34} then imply that 
		\begin{equation}\label{4.35}
			\frac{1}{\sqrt{2}c} \leq 	|\frac{\mu^{+}}{\tau+ i\bar{v}^{+}_{1} \eta} | = \frac{1}{\sqrt{(X_{1}^{2}+ (\bar{v}^{+}_{1})^{2}) }}\leq  \frac{1}{c\sqrt{(\sqrt{1+4\epsilon_{0}^{2}}-1)}}.
		\end{equation}

		Therefore, employing \eqref{4.34} and \eqref{4.31},  we deduce 
		\begin{equation} \label{4.36}
			\begin{aligned}
				&\|v_{n,1}(t=0,x_{1},x_{2})\|^{2}_{H^{j}(\Omega)}= \|  w_{n}(x_{1},x_{2})\|^{2}_{H^{j}(\Omega)}\\
				&\leq   \sum_{s=0}^{j} \int_{\mathbb{R}}  (1+\eta^{2})^{j-s}|\frac{ (\mu^{+}-\mu^{-})c^{2} i \eta }{\tau+ i\bar{v}^{+}_{1} \eta }|^{2}\hat{g}_{n}(\eta)|^{2} \int_{0}^{\infty}|\partial_{2}^{s} e^{-\mu^{+}x_2}|^{2} dx_{2} d \eta \\
				&+  \sum_{s=0}^{j} \int_{\mathbb{R}}  (1+\eta^{2})^{j-s} |\frac{ (\mu^{+}-\mu^{-})c^{2} i \eta }{\tau- i\bar{v}^{+}_{1} \eta }|^{2}| \hat{g}_{n}(\eta)|^{2} \int_{-\infty}^{0}|\partial_{2}^{s}e^{\mu^{-}x_2}|^{2} dx_{2}  d \eta \\
				& \leq  \frac{1}{2c^{2}(\sqrt{1+4\epsilon_{0}^{2}}-1)}  \sum_{s=0}^{j}\int_{\mathbb{R}}  (1+\eta^{2})^{j-s} |\frac{(\mu^{+}-\mu^{-})}{\mu^{+ }}|^{2} |\mu^{+}|^{2s+1}  |\chi_{n}(\eta)|^{2} d\eta\\
				&+ \frac{1}{2c^{2}(\sqrt{1+4\epsilon_{0}^{2}}-1)} \sum_{s=0}^{j}\int_{\mathbb{R}}  (1+\eta^{2})^{j-s}   |\frac{(\mu^{+}-\mu^{-})}{\mu^{-}}|^{2} |\mu^{-}|^{2s+1} |\chi_{n}(\eta)|^{2} d \eta\\
				& \leq \frac{j+1}{c^{2}(\sqrt{1+4\epsilon_{0}^{2}}-1)} \int_{\mathbb{R}}  (1+\eta^{2})^{j+1}  |\chi_{n}(\eta)|^{2} d\eta  \lesssim  \frac{1}{ n^{2}}.
			\end{aligned}
		\end{equation}
		Similarly, we have 
		\begin{equation}\label{4.37}
			\|v_{n,2}(t=0,x_{1},x_{2})\|^{2}_{H^{j}(\Omega)} \lesssim  \frac{1}{ n^{2}},
		\end{equation}
		whereas for $\eta\geq n$ and $t\geq T_{0}$ we  deduce
		\begin{equation} \label{4.38}
			\begin{aligned}
				&\|v_{n,1}\|_{H^{k}(\Omega)}^{2}\geq  \int_{\mathbb{R}}(1+\eta^{2})^{k} \|\hat{v}_{1}\|_{L^{2}(I_{\pm})}^{2} d \eta\\
				&\geq  \int_{\mathbb{R}}  (1+\eta^{2})^{k} |\frac{ (\mu^{+}-\mu^{-})c^{2} i \eta }{\tau+ i\bar{v}^{+}_{1} \eta }|^{2}|\hat{f}|^{2}e^{-\mu^{+}x_2}\int_{0}^{\infty}e^{-2\mu^{+}x_2} dx_{2} d \eta\\
				&+\int_{\mathbb{R}}  (1+\eta^{2})^{k} |\frac{ (\mu^{+}-\mu^{-})c^{2} i \eta }{\tau- i\bar{v}^{+}_{1} \eta }|^{2} |\hat{f}|^{2}\int_{0}^{\infty}e^{2\mu^{-}x_2} dx_{2} d \eta\\
				&\geq  \frac{1}{2}\int_{\mathbb{R}}  (1+\eta^{2})^{k} c^{4}|\frac{ \mu^{+}-\mu^{-}}{\mu^{+}}|^{2} |\frac{i \eta }{\tau+ i\bar{v}^{+}_{1} \eta }|^{2} e^{2 X_{1} \eta t} |\chi_{n}(\eta)|^{2} |\mu^{+}|d \eta \\
				&+\frac{1}{2}\int_{\mathbb{R}}  (1+\eta^{2})^{k} c^{4}|\frac{ \mu^{+}-\mu^{-}}{\mu^{-}}|^{2} |\frac{i \eta }{\tau- i\bar{v}^{+}_{1} \eta }|^{2} e^{2 X_{1} \eta t} |\chi_{n}(\eta)|^{2} |\mu^{-}|d \eta\\
				&\geq c^{2}\tilde{C}\frac{e^{2 X_{1}n T_{0}}}{ 1+(n+1)^{j-k+1}}  n \int_{\mathbb{R}}  (1+\eta^{2})^{j+1}    |\chi_{n}(\eta)|^{2} d\eta.
			\end{aligned}
		\end{equation}
		
		Let $n$ be sufficiently large so that 
		\begin{equation}\label{4.39}
			c^{2} \tilde{C}\frac{e^{2 X_{1}n T_{0}}}{ 1+(n+1)^{j-k+1}} \geq \alpha^{2} n \bar{C}_{j}^{2}.
		\end{equation}
		Hence 
		we may estimate 
		\begin{equation}\label{4.40}
			\| v_{n,1}(t)\|_{H^{k}(\Omega)}\geq \alpha.
		\end{equation}
		Similarly we have 
		\begin{equation} \label{4.41}
			\|v_{n,2}\|_{H^{k}(\Omega)}^{2}\geq  \alpha.
		\end{equation}
		
		Collecting the estimates \eqref{4.12}, \eqref{4.25} and \eqref{4.36} gives 
		\begin{equation}\label{4.42}
			\|f_{n}(0)\|_{H^{j}(\Omega)}+\|h_{n}(0)\|_{H^{j}(\Gamma)}+ \|v_{n}(0)\|_{H^{j}(\Omega)}\lesssim \frac{1}{n},
		\end{equation}
		but the estimates \eqref{4.15}, \eqref{4.28} \eqref{4.40} and \eqref{4.41} yield
		\begin{equation}\label{4.43}
			\|f_{n}\|_{H^{k}(\Gamma)}+ \|h_{n}\|_{H^{k}(\Omega)}+ \| v_{n}\|_{H^{k}(\Omega)}\geq \alpha,~for~all~t\geq T_{0}.
		\end{equation}
	\end{proof}

	\section{Ill-posedness  for the nonlinear problem}
	\quad \quad  Now  we will prove nonlinear ill-posedness for the nonlinear problem \eqref{1.10}.  To begin with, we rewrite the nonlinear system \eqref{1.10} in a perturbation formulation around the rectilinear solution.  Let
	\begin{equation}\label{5.1}
		\begin{aligned}
			f=0+ \tilde{f},~v= \bar{v}+ \tilde{v},~h= \bar{h}+ \tilde{h},\Psi= Id+ \tilde{\Psi}, \\
			\varrho=\bar{\varrho}+ \tilde{\varrho},~\psi= 0+ \tilde{\psi},~ n=e_{2}+ \tilde{n},~     A=I-B,
		\end{aligned}
	\end{equation}
	where
	\begin{equation}\label{5.2}
		B^{T}= \sum_{n=1}^{\infty} (-1)^{n-1} (D\tilde{\Psi} )^{n}.
	\end{equation}
	We can rewrite the term $\breve{v}$ as follows
	\begin{equation}\label{5.3}
		\begin{aligned}
			\breve{v}&= (I-B)(\bar{v}+ \tilde{v})- (0, \frac{\partial_{t}\tilde{\psi}}{1+ \partial_{2} \tilde{\psi}})\\
			&= \bar{v}+ \tilde{v}- B(\bar{v}+ \tilde{v})- (0, \frac{\partial_{t}\tilde{\psi}}{1+ \partial_{2} \tilde{\psi}}):= \bar{v}+M,
		\end{aligned}
	\end{equation}
	where the $M$ is defined as follows
	\begin{equation}\label{5.4}
		\begin{aligned}
			M= \tilde{v}- B(\bar{v}+ \tilde{v})- (0, \frac{\partial_{t}\tilde{\psi}}{1+ \partial_{2} \tilde{\psi}}).
		\end{aligned}
	\end{equation}
	To linearized  the term $ c^{2}(h)= c^{2}(\bar{h}+ \tilde{h})$,  we employ Taylor formula to get
	\begin{equation}\label{5.5}
		c^{2}(\bar{h}+ \tilde{h})= c^{2}(\bar{h}) + \mathcal{R},
	\end{equation}
	where the reminder term is defined by
	\begin{equation}\label{5.6}
		\begin{aligned}
			\mathcal{R}=  (c^{2})^{\prime}(\bar{h}+ (1-\alpha)\tilde{h}) \tilde{h}, ~0<\alpha<1.
		\end{aligned}
	\end{equation}
	For the term $v\cdot n $,  we can rewrite it as
	\begin{equation}\label{5.7}
		v\cdot n= (\bar{v}+ \tilde{v})\cdot (e_{2}+ \tilde{n})
		=\tilde{v}_{2}-\bar{v}_{1} \partial_{1} \tilde{f}+\tilde{v} \cdot \tilde{n}.
	\end{equation}
	Then the nonlinear system \eqref{1.10}  can be rewritten for $\tilde{h}, \tilde{v}, \tilde{f}$ as
\begin{equation}\label{5.8}
	\begin{cases}
		\partial_{t}\tilde{h}+(\bar{v} \cdot \nabla) \tilde{h}+ \nabla \cdot \tilde{v}=-(M \cdot \nabla) \tilde{h}+B^{T} \nabla \cdot \tilde{v}, & \text { in }[0, T] \times \Omega, \\
		\partial_{t} \tilde{v}+(\bar{v} \cdot \nabla) \tilde{v}
		+ c^{2}(\varrho_{0})\nabla \tilde{h}=-(M \cdot \nabla) \tilde{v}\\
		+c^{2}(\varrho_{0})B^{T}\nabla \tilde{h}- \mathcal{R}(\nabla \tilde{h}- B^{T}\nabla \tilde{h}), & \text { in }[0, T] \times \Omega, \\
		\partial_{t} \tilde{f}+\bar{v}_{1} \partial_{1} \tilde{f}-\tilde{v}_{2} =\tilde{v} \cdot \tilde{n},   & \text { on }[0, T] \times \Gamma.
	\end{cases}
\end{equation}
The jump conditions take new form in terms of $\tilde{h}, \tilde{v}, \tilde{f}$
\begin{equation}\label{5.9}
	\left\{\begin{array}{l}
		\left(\tilde{v}^{+}-\tilde{v}^{-}\right) \cdot e_{2}+(\bar{v}^{+}-\bar{v}^{-})\cdot \tilde{n} =-(\tilde{v}^{+}-\tilde{v}^{-})\cdot \tilde{n}, \\
		\bar{h}^{+}+  \tilde{h}^{+}=\bar{h}^{-}+  \tilde{h}^{-}.
	\end{array}\right.
\end{equation}

\underline{\textbf{Proof of Theorem \ref{theorem:main}}}\quad 
Now we are ready to prove the main theorem 1.3.   We prove it by the method of  contradiction. Suppose that the system \eqref{1.10}  is locally well-posedness  for some $k \geq 3$. Let $\delta, t_{0}, C>0$ be the constants provided by Definition 1.2. For $\varepsilon>0$, let $(f^{\varepsilon},h^{\varepsilon},v^{\varepsilon})(t)$ with initial data $(f^{\varepsilon},h^{\varepsilon},v^{\varepsilon})|_{t=0}=(f^{\varepsilon}_{0},h^{\varepsilon}_{0},v^{\varepsilon}_{0})$ is an sequence  solution of the system \eqref{1.10}. We choose $(f^{1},h^{1},v^{1}) $ to be  $(f^{\varepsilon},h^{\varepsilon},v^{\varepsilon})$.  We also replace $(f^{2}_{0},h^{2}_{0},v^{2}_{0}) $ by a steady-state solution $U\equiv (\bar{f},\bar{h},\bar{v})$. Obviously, $U$ is always the solution of the system \eqref{1.10}. For simplicity, we always take this steady-state $U$ as the solution of the system \eqref{1.10}, i.e.,  $(f^{2},h^{2},v^{2})(t) =U$ for $t\geq 0$.

Fix $n \in \mathbb{N}$ so that $n>C$. Applying Lemma 4.2 with this $n, T_{0}=t_{0} / 2, k \geq 3$, and $\alpha=2$, we can find $f^{L}, h^{L}, v^{L}$ solving \eqref{2.2} so that
\begin{equation}\label{5.12}
	\|(f^{L}_{0},h^{L}_{0}, v^{L}_{0})\|_{H^{k}}\lesssim \frac{1}{n},
\end{equation}
but
\begin{equation}\label{5.13}
	\|(f^{L}(t),h^{L}(t), v^{L}(t))\|_{H^{3}} \geq 2 \quad \text { for } t \geq t_{0} / 2.
\end{equation}

We define $\tilde{f}_{0}^{\varepsilon}=f_{0}^{\varepsilon}-\bar{f} :=\varepsilon f^{L}_{0}$, $\tilde{h}_{0}^{\varepsilon}=h_{0}^{\varepsilon}-\bar{h}:=\varepsilon h^{L}_{0}$  and $\tilde{v}_{0}^{\varepsilon}=v_{0}^{\varepsilon}-\bar{v}:=\varepsilon v^{L}_{0}$. Then for $\varepsilon<\delta n$ we have $\|(\tilde{f}_{0}^{\varepsilon}, \tilde{h}_{0}^{\varepsilon},  \tilde{v}_{0}^{\varepsilon})\|_{H^{k}}<\delta$, so according to Definition 1.2, there exist $\left(\tilde{f}^{\varepsilon}:=f^{\varepsilon}- \bar{f} , \tilde{h}^{\varepsilon}:=h^{\varepsilon}-\bar{h}, \tilde{v}^{\varepsilon}:= v^{\varepsilon}-\bar{v} \right) \in L^{\infty}\left(\left[0, t_{0}\right] ; H^{3}(\Omega)\right)$ that solve \eqref{5.8}-\eqref{5.9} with $(\tilde{f}_{0}^{\varepsilon}, \tilde{h}_{0}^{\varepsilon}, \tilde{v}_{0}^{\varepsilon})$ as initial data and that satisfy the inequality
\begin{equation}\label{5.14}
	\begin{aligned}
		\sup _{0 \leq t \leq t_{0}}\left\|\left(\tilde{f}^{\varepsilon}, \tilde{h}^{\varepsilon}, \tilde{v}^{\varepsilon} \right)(t)\right\|_{H^{3}} & \leq C\left(\left\|\left(f_{0}^{\varepsilon}, h_{0}^{\varepsilon}, v_{0}^{\varepsilon}\right)\right\|_{H^{k}}\right)  \\
		& \leq C \varepsilon \frac{1}{n}<\varepsilon.
	\end{aligned}
\end{equation}

Now define the rescaled functions $\bar{f}^{\varepsilon}=\tilde{f}^{\varepsilon} / \varepsilon, \bar{h}^{\varepsilon}=\tilde{h}^{\varepsilon} / \varepsilon , \bar{v}^{\varepsilon}=\tilde{v}^{\varepsilon} / \varepsilon$; rescaling \eqref{5.14} then shows that
\begin{equation}\label{5.15}
	\sup _{0 \leq t \leq t_{0}}\left\|(\bar{f}^{\varepsilon}, \bar{h}^{\varepsilon}, \bar{v}^{\varepsilon})(t)\right\|_{H^{3}}<1.
\end{equation}

By construction, we know that  $(\bar{f}^{\varepsilon}_{0}, \bar{h}^{\varepsilon}_{0}, \bar{v}^{\varepsilon}_{0})=(f^{L}_{0}, h^{L}_{0}, v^{L}_{0})$. We are going  to show that the rescaled functions  $(\bar{f}^{\varepsilon}, \bar{h}^{\varepsilon}, \bar{v}^{\varepsilon})$ converge as $\varepsilon \rightarrow 0$ to the solutions $(f^{L}, h^{L}, v^{L})$ of the linearized equations \eqref{3.2}.

Now we are going to  reformulate \eqref{5.8}-\eqref{5.9} in terms of rescaled functions $(\bar{f}^{\varepsilon}, \bar{h}^{\varepsilon}, \bar{v}^{\varepsilon})$ and   show  some convergence results. The third equation in \eqref{5.5} can rewritten in terms of rescaled function $(\bar{f}^{\varepsilon}, \bar{h}^{\varepsilon}, \bar{v}^{\varepsilon})$  as follows:
\begin{equation}\label{5.16}
	\partial_{t} \bar{f} ^{\varepsilon}+\bar{v}_{1} \partial_{1} \bar{f}^{\varepsilon}-\bar{v}^{\varepsilon}_{2} =\varepsilon \bar{v}^{\varepsilon} \cdot n^{\varepsilon}. 
\end{equation}
where  $n^{\varepsilon}=\frac{(-\varepsilon \partial_{1} \bar{f}^{\varepsilon}, 0)}{\varepsilon}=(- \partial_{1} \bar{f}^{\varepsilon}, 0)$ is well defined and uniformly bounded in $L^{\infty}\left(\left[0, t_{0}\right] ; H^{2}(\Gamma)\right)$ since
\begin{equation}\label{5.17}
	\|n^{\varepsilon}\|_{H^{2}(\Gamma)}\leq \| \bar{f}^{\varepsilon}\|_{H^{3}(\Gamma)}<1.
\end{equation}
Hence from \eqref{5.15} and\eqref{5.17}, we obtain
\begin{equation}\label{5.18}
	\lim _{\varepsilon \rightarrow 0} \sup _{0 \leq t \leq t_{0}}\left\|\partial_{t} \bar{f}^{\varepsilon}+\bar{v}_{1} \partial_{1} \bar{f}^{\varepsilon}-\bar{v}^{\varepsilon}_{2}\right\|_{H^{2}}=0 
\end{equation}
and 
\begin{equation}\label{5.188}
	\sup _{0 \leq t \leq t_{0}}\left\|\partial_{t} \bar{f}^{\varepsilon}(t)\right\|_{H^{2}}\leq \bar{v}_{1} \sup _{0 \leq t \leq t_{0}}\left\|\partial_{1} \bar{f}^{\varepsilon}(t)\right\|_{H^{2}}+\sup _{0 \leq t \leq t_{0}}\left\|\bar{v}^{\varepsilon}_{2}\right\|_{H^{2}}\leq C
\end{equation}

Expanding the first equation in \eqref{5.8}  implies that
\begin{equation}\label{5.19}
	\partial_{t}\bar{h}^{\varepsilon}+(\bar{v} \cdot \nabla) \bar{h}^{\varepsilon}+ \nabla \cdot \bar{v}^{\varepsilon}=-\varepsilon(M^{\varepsilon} \cdot \nabla) \bar{h}^{\varepsilon}+\varepsilon(B^{\varepsilon})^{T} \nabla \cdot \bar{v}^{\varepsilon}, 
\end{equation}
where we define $M^{\varepsilon}$ as follows
\begin{equation}\label{5.20}
	M^{\varepsilon}=\bar{v}^{\varepsilon}-B^{\varepsilon}(\bar{v}+ \varepsilon \bar{v}^{\varepsilon})-(0,\frac{\partial_{t} \psi^{\varepsilon}}{1+ \varepsilon \partial_{2} \psi^{\varepsilon}}), \psi^{\varepsilon}= \theta \bar{f}^{\varepsilon}.
\end{equation}
In order to estimate the bound of $M^{\varepsilon}$, we firstly estimate the bound of $B^{\varepsilon}$. We  assume that $\varepsilon$ is sufficiently small so that
$\varepsilon<1 /\left(2 C_{1}\right)$, where $K_{1}>0$ is the best constant in the inequality $\|U V\|_{H^{2}} \leq$ $C_{1}\|U\|_{H^{2}}\|V\|_{H^{2}}$ for $3 \times 3$ matrix-valued functions $U, V$. This assumption guarantees that $B^{\varepsilon}:=(I-(I+\varepsilon \nabla  \Psi^{\varepsilon})^{-1})/\varepsilon $ is well defined and uniformly bounded in $L^{\infty}\left(\left[0, t_{0}\right] ; H^{2}(\Omega)\right)$ since
\begin{equation}\label{5.21}
	\begin{aligned}
		\left\|{B}^{\varepsilon}\right\|_{H^{2}} & =\left\|\sum_{n=1}^{\infty}(-\varepsilon)^{n-1}(\nabla  \Psi^{\varepsilon})^{n}\right\|_{H^{2}} \leq \sum_{n=1}^{\infty} \varepsilon^{n-1}\left\|(\nabla  \Psi^{\varepsilon})^{n}\right\|_{H^{2}}  \\
		& \leq \sum_{n=1}^{\infty}\left(\varepsilon K_{1}\right)^{n-1}\left\|\nabla  \Psi^{\varepsilon}\right\|_{H^{2}}^{n} \leq \sum_{n=1}^{\infty} \frac{1}{2^{n-1}}\left\|\psi^{\varepsilon}\right\|_{H^{3}}^{n}\\
		&\leq \sum_{n=1}^{\infty} \frac{1}{2^{n-1}}\left\|\bar{f}^{\varepsilon}\right\|_{H^{3}}^{n}
		<\sum_{n=1}^{\infty} \frac{1}{2^{n-1}}=2,
	\end{aligned}
\end{equation}
whereas we shows that 
\begin{equation}\label{5.22}
	\begin{aligned}
		\left\|{M}^{\varepsilon}\right\|_{H^{2}}& \leq \| \bar{v}^{\varepsilon}\|_{H^{2}}+  \bar{v} \|  B^{\varepsilon}\|_{H^{2}} \|+\varepsilon \|  B^{\varepsilon}\|_{H^{2}}\|\bar{v}^{\varepsilon}\|_{H^{2}}+\|\partial_{t}  \psi^{\varepsilon}\|_{H^{2}}\\
		&\leq \| \bar{v}^{\varepsilon}\|_{H^{2}}+   \bar{v} \|  B^{\varepsilon}\|_{H^{2}} \|+\varepsilon \|  B^{\varepsilon}\|_{H^{2}}\|\bar{v}^{\varepsilon}\|_{H^{2}}+\|\partial_{t}  \bar{f}^{\varepsilon}\|_{H^{2}}\\
		&\leq C.
	\end{aligned}
\end{equation}

Therefore employing \eqref{5.15},\eqref{5.21} and \eqref{5.22} we get
\begin{equation}\label{5.23}
	\lim _{\varepsilon \rightarrow 0}	\sup _{0 \leq t \leq t_{0}}\left\|\partial_{t} \bar{h}^{\varepsilon}+(\bar{v} \cdot \nabla) \bar{h}^{\varepsilon}+ \nabla \cdot \bar{v}^{\varepsilon} \right\|_{H^{2}}=0, 
\end{equation}
and 
\begin{equation}\label{5.24}
	\sup _{0 \leq t \leq t_{0}}\left\|\partial_{t} \bar{h}^{\varepsilon}(t)\right\|_{H^{2}}<C.
\end{equation}

Expanding the second equation in \eqref{5.8}, we find that
\begin{equation}\label{5.25}
	\begin{aligned}
		&\partial_{t} \bar{v}^{\varepsilon}+(\bar{v} \cdot \nabla) \bar{v}^{\varepsilon}+ c^{2}\nabla \bar{h}^{\varepsilon}=-\varepsilon(M^{\varepsilon} \cdot \nabla) \bar{v}^{\varepsilon}\\
		&+\varepsilon c^{2}(B^{\varepsilon})^{T}\nabla \bar{h}^{\varepsilon}+ \varepsilon\mathcal{R}^{\varepsilon}(\nabla  \bar{h}^{\varepsilon}- \varepsilon(B^{\varepsilon})^{T}\nabla  \bar{h}^{\varepsilon}). 
	\end{aligned}
\end{equation}
where  we define the normalized remainder function by
\begin{equation}\label{5.26}
	\mathcal{R}^{\varepsilon}(x, t)  =\frac{ (c^{2})^{\prime}(\bar{h}+ (1-\alpha) \varepsilon \bar{h}^{\varepsilon}) \varepsilon \bar{h}^{\varepsilon}}{\varepsilon}= (c^{2})^{\prime}(\bar{h}+ (1-\alpha) \varepsilon \bar{h}^{\varepsilon})\bar{h}^{\varepsilon}.
\end{equation}
It is easier to show  that $\bar{h}+ (1-\alpha) \varepsilon \bar{h}^{\varepsilon}$ is bounded above  by a positive constant. Taking use of  \eqref{5.15}  which imply
\begin{equation}\label{5.28}
	\sup _{0 \leq t \leq t_{0}}\left\|	\mathcal{R}^{\varepsilon}(x, t)\right\|_{H^{3}} \leq C.
\end{equation}

Therefore from \eqref{5.28} and \eqref{5.15}, we deduce that 
\begin{equation}\label{5.29}
	\lim _{\varepsilon \rightarrow 0} \sup _{0 \leq t \leq t_{0}}\left\|\partial_{t} \bar{v}^{\varepsilon}+(\bar{v} \cdot \nabla) \bar{v}^{\varepsilon}+ c^{2}(\varrho_{0})\nabla \bar{h}^{\varepsilon}\right\|_{H^{2}}=0 
\end{equation}
and
\begin{equation}\label{5.30}
	\sup _{0 \leq t \leq t_{0}}\left\|\partial_{t} \bar{v}^{\varepsilon}(t)\right\|_{H^{2}}<C.
\end{equation}

Next, we deal with some convergence results for the jump conditions. For the first equation in \eqref{5.9} we rewrite the normal vector $n$  as follows
\begin{align*}\label{5.36}
	n=e_{2}+ \tilde{n}^{\varepsilon} : =e_{2}+\varepsilon n^{\varepsilon},~n^{\varepsilon}=(-\partial_{1} \bar{f}^{\varepsilon},0),
\end{align*}
so we may rewrite the second equation in \eqref{5.9} as
\begin{equation}\label{5.37}
	\left(\bar{v}^{+}+\varepsilon \bar{v}^{+,\varepsilon}-\bar{v}^{-}-\varepsilon \bar{v}^{-,\varepsilon}\right) \cdot\left(e_{2}+\varepsilon n^{\varepsilon}\right)=0 
\end{equation}
Since  $\sup_{0 \leq t \leq t_{0}}\left\|n^{\varepsilon}(t)\right\|_{L^{\infty}}\leq \| n^{\varepsilon}\|_{H^{3}(\Gamma)}<1 $ is bounded uniformly, we find that
\begin{equation}\label{5.38}
	\sup _{0 \leq t \leq t_{0}}\left\|e_{2} \cdot\left(\bar{v}^{+,\varepsilon}(t)-\bar{v}^{-,\varepsilon}(t)+ (\bar{v}^{+}-\bar{v}^{-}) \cdot  n^{\varepsilon} \right)\right\|_{L^{\infty}} \rightarrow 0 \quad \text { as } \varepsilon \rightarrow 0 
\end{equation}
Therefore we have 
\begin{equation}\label{5.39}
	[\bar{v}^{\varepsilon} \cdot e_{2}]= 2\bar{v}^{+}_{1} \partial_{1} f^{\varepsilon}~on~ \Gamma.
\end{equation}

We expand the second equation in \eqref{5.9} as follows
\begin{equation}\label{5.31}
	\bar{h}^{+}+ \varepsilon \bar{h}^{+,\varepsilon}=\bar{h}^{-}+ \varepsilon \bar{h}^{-,\varepsilon} ~on~ \Gamma.
\end{equation}
Since $\bar{h}^{+}=\bar{h}^{-}$, we may eliminate these two terms from equation \eqref{5.31} and divide both sides by $\varepsilon$ to get
\begin{equation}\label{5.35}
	\bar{h}_{+}^{\varepsilon}=\bar{h}_{-}^{\varepsilon} ~on~ \Gamma.
\end{equation}

According to the bound \eqref{5.15} and sequential weak-* compactness, we have that up to the extraction of a subsequence (which we still denote using only $\varepsilon$ )
\begin{equation}\label{5.40}
	(\bar{f}^{\varepsilon}, \bar{h}^{\varepsilon}, \bar{v}^{\varepsilon}) \stackrel{*}\rightharpoonup (f^{\star}, h^{\star}, v^{\star}) \quad \text { weakly }-* \text { in } L^{\infty}\left(\left[0, t_{0}\right] ; H^{3}(\Omega)\right). 
\end{equation}
By lower semicontinuity we know that
\begin{equation}\label{5.41}
	\sup _{0 \leq t \leq t_{0}}\left\|(f^{\star},h^{\star},v^{\star})(t)\right\|_{H^{3}} \leq 1. 
\end{equation}
In according with \eqref{5.188}, \eqref{5.24}, and \eqref{5.30}, we get
\begin{equation}\label{5.42}
	\limsup _{\varepsilon \rightarrow 0} \sup _{0 \leq t \leq t_{0}}\left\|(\partial_{t} \bar{f}^{\varepsilon}, \partial_{t} \bar{h}^{\varepsilon}, \partial_{t} \bar{v}^{\varepsilon})(t)\right\|_{H^{2}}<\infty.
\end{equation}

By Lions-Abin lemma in \cite{Simon}, we then have that the sequence $\left\{\left(f^{\varepsilon}, h^{\varepsilon}, v^{\varepsilon}\right)\right\}$ is strongly precompact in the space $L^{\infty}\left(\left[0, t_{0}\right] ; H^{8/3}(\Omega)\right)$, so
\begin{equation}\label{5.43}
	(\bar{f}^{\varepsilon},\bar{h}^{\varepsilon}, \bar{v}^{\varepsilon}) \rightarrow (f^{\star}, h^{\star}, v^{\star})\quad \text { strongly in } L^{\infty}\left(\left[0, t_{0}\right] ; H^{8 / 3}(\Omega)\right). 
\end{equation}
This strong convergence, together with  \eqref{5.18}, \eqref{5.24},\eqref{5.30}, implies that
\begin{equation}\label{5.44}
	(\partial_{t} \bar{f}^{\varepsilon}, \partial_{t} \bar{h}^{\varepsilon}, \partial_{t} \bar{v}^{\varepsilon}) \rightarrow\left(\partial_{t} f^{\star}, \partial_{t} v^{\star}, \partial_{t} h^{\star}\right) \text { strongly in } L^{\infty}\left(\left[0, t_{0}\right] ; H^{5 /3}(\Omega)\right),
\end{equation}
The index $\frac{8}{3}$ and $\frac{5}{3}$ are sufficient large to give $L^{\infty}([0,t_{0}];L^{\infty})$ convergence of $\left\{\left(f^{\varepsilon}, h^{\varepsilon}, v^{\varepsilon}\right)\right\}$, thus we have 
\begin{equation}\label{5.45}
	\left\{
	\begin{aligned}
		&\partial_{t}h^{\star}+(\bar{v} \cdot \nabla) h^{\star}+ \nabla \cdot v^{\star}=0,& \text { in } \Omega, \\
		&\partial_{t} v^{\star}+(\bar{v} \cdot \nabla) v^{\star}+ c^{2}\nabla h^{\star}=0,& \text { in } \Omega, \\ 
		&\partial_{t} f^{\star}+\bar{v}_{1} \partial_{1} f^{\star}-v^{\star}_{2}=0& \text { on } \Gamma,
	\end{aligned}
	\right.
\end{equation}
and
\begin{equation}\label{5.46}
	\begin{aligned}
		h_{+}^{\star}=h^{\star}_{-} & \text { on }~\Gamma, \\
		\left(v_{+}^{\star}-v_{-}^{\star}\right) \cdot e_{2}= 2\bar{v}^{+}_{1} \partial_{1} f^{\star} & \text { on }~\Gamma.
	\end{aligned}
\end{equation}
We also pass to the limit in the initial conditions $(\bar{f}^{\varepsilon}_{0}, \bar{h}^{\varepsilon}_{0}, \bar{v}^{\varepsilon}_{0})=(f^{L}_{0}, h^{L}_{0}, v^{L}_{0})$ to obtain
\begin{equation*}
	(f^{\star}_{0}, v^{\star}_{0}, h^{\star}_{0}) =(f^{L}_{0}, h^{L}_{0}, v^{L}_{0}).
\end{equation*}
Now we can see that $(f^{\star}, v^{\star}, h^{\star})(t)$ are solutions to \eqref{2.2} and boundary conditions \eqref{2.3} with same initial data. In according with the uniqueness result in lemma 4.1, we
have
\begin{equation}\label{5.47}
	(f^{\star}, v^{\star}, h^{\star})(t) =(f^{L}, v^{L}, h^{L})(t). 
\end{equation}
Therefore  we combine  inqualities \eqref{5.41} with \eqref{5.15} to get
\begin{equation}\label{5.48}
	2=\alpha<\sup _{0 \leq t \leq t_{0}}\left\|(f^{\star}, v^{\star}, h^{\star})(t)\right\|_{H^{3}} \leq 1. 
\end{equation}
which is a contradiction. Therefore, the proof of Theorem 1.3 is completed.

	\section{Acknowledgements}
	\quad The  authors would like to thank Professor Zhouping Xin for pointing this direction. The  authors also would like to thank Professor Yan Guo for his encouragement. Xie's work is supported by NSF-China under grant number 11901207 and the National Key Program  of China (2021YFA1002900). Zhao's  work is supported by the project funded by China Postdoctoral Science Foundation (2023M740334).

	\bibliographystyle{amsplain}

\end{document}